\documentclass[10pt]{amsart}

\usepackage{amsmath,  amsthm, enumerate, amssymb, mathtools, csquotes}
\usepackage[provide=*,english]{babel}

\usepackage{tikz-cd, tikz}

\usepackage{mathrsfs}

\usepackage{hyperref}

\usepackage{graphicx}

\newcommand{\Z}{\mathbb{Z}}
\newcommand{\R}{\mathbb{R}}

\newcommand{\Q}{\mathbb{Q}}
\newcommand{\C}{\mathbb{C}}

\newcommand{\Or}{\mathcal{O}}

\newcommand{\F}{\mathbb F}

\newcommand{\p}{\mathfrak p}

\newcommand{\h}{\mathfrak h}

\DeclareMathOperator{\PSL}{PSL}
\DeclareMathOperator{\SL}{SL}
\DeclareMathOperator{\GL}{GL}

\DeclareMathOperator{\Free}{Free}
\DeclareMathOperator{\Forget}{Forget}
\DeclareMathOperator{\Ob}{Ob}
\DeclareMathOperator{\Mor}{Mor}
\DeclareMathOperator{\Sym}{Sym}
\DeclareMathOperator{\Hom}{Hom}

\DeclareMathOperator{\im}{Im}
\DeclareMathOperator{\codim}{codim}

\newcommand{\op}{\textrm{op}}
\newcommand{\id}{\textrm{id}}

\newcommand{\Fs}{\mathscr{F}}
\newcommand{\G}{\mathscr{G}}

\numberwithin{equation}{section}

\theoremstyle{plain}
\newtheorem{thm}[equation]{Theorem}
\newtheorem{lem}[equation]{Lemma}
\newtheorem{prop}[equation]{Proposition}
\newtheorem{cor}[equation]{Corollary}

\theoremstyle{definition}
\newtheorem{prob}[equation]{Problem}
\newtheorem{defn}[equation]{Definition}
\newtheorem{nota}[equation]{Notation}

\newtheorem{algo}{Algorithm}[subsection]

\theoremstyle{remark}
\newtheorem{rem}[equation]{Remark}
\newtheorem{ex}{Example}

\begin{document}
\title[Computing the cohomology of Shimura curves in quasi-linear time]{Computing the cohomology of Shimura curves\\in quasi-linear time}

\author{Rayane Baït$^{\ast}$ and Aurel Page}
\address{Univ. Bordeaux, CNRS, Inria, Bordeaux INP, IMB, UMR 5251, F-33400 Talence, France}
\thanks{$^\ast$Correspondence: \url{rayane.bait@math.u-bordeaux.fr}}
\keywords{Shimura curves, algorithms, Fuchsian groups, surfaces, cohomology,
Hilbert modular forms}
\subjclass[2020]{11Y99 (Primary) 11F75, 11R52, 55-08, 20H10 (Secondary)}
\date{}



\begin{abstract}
	Computing spaces of modular and automorphic forms is an important
problem in algorithmic number theory, with in particular Diophantine
applications to generalised Fermat and other equations. The case of Shimura
curves was studied by Greenberg and Voight by cohomological methods, allowing
them to reduce the problem to linear algebra. Relying on work of Imbert, we leverage
topological techniques to obtain linear systems so structured that they can be
solved in time quasi-linear in the genus.
\end{abstract}

\maketitle
\section{Introduction}
The computation of spaces of modular and automorphic forms is a major theme in
the algorithmic number theory community, with a long tradition of supporting
theoretical developments and testing conjectures, efforts of systematic
tabulation ongoing with the constant growth of
extensive databases such as the LMFDB \cite{lmfdb}, and applications to
Diophantine problems generalising the successful proof of Fermat's last theorem.
Unlocking new types of automorphic forms is interesting, but improving the
efficiency of known cases is equally important.
For instance to realize Darmon's program
on generalised Fermat equations,
one needs to compute various spaces of automorphic forms,
but the levels that appear can be so large that the spaces become difficult
to compute (see for instance~\cite[Section 8]{BillereyChenDieulefaitFreitas}).

Spaces of classical modular forms can be efficiently computed using the method of
modular symbols.
For Hilbert modular forms, Dembélé, Donnelly, Greenberg and Voight
\cite{dembdonn,greenbergvoightHilbert,voightarbitrary,dembelevoightHilbert} gave
practical algorithms to obtain the 
Hecke-module structure of spaces of Hilbert modular forms.
In the odd degree case, the so called ``indefinite method''
requires the computation of the cohomology~$H^1(\Gamma, V)$ for 
some congruence arithmetic Fuchsian groups $\Gamma$ and certain modules $V$;
this is achieved by computing a presentation of~$\Gamma$ from a fundamental
domain, then solving a linear system of size~$O(\mu\dim V)$ where~$\mu$ denotes the
co-area of~$\Gamma$. While practical, this method
is far from the quasi-linear complexity of modular symbols.

In this work, we describe and implement an algorithm to compute a basis of~$H^1(\Gamma,V)$ whose
complexity depends quasi-linearly on~$\mu$.
The main idea is that instead of using the presentation of~$\Gamma$ directly
derived from the fundamental domain, we can leverage the theory of surfaces to
obtain a presentation of a special form. This allows us to compute the
cohomology group~$H^1(\Gamma,V)$ from a linear system that is so structured that
it can be solved in quasi-linear time.

To achieve this, we revisit the work of M. Imbert
\cite{Imbert1,Imbert2} in the framework of groupoids and graph
embeddings, to turn his method into 
rigorous and explicit algorithms. By crucially using straight line programs and
with sufficient care, this leads to algorithms
with good complexity. For simplicity, in this introduction we state our results
assuming that~$\Gamma$ is a congruence arithmetic Fuchsian group.
In Section~\ref{sec:algos}, we prove the following result (see
Theorem~\ref{thm:algopres}).
\begin{thm}\label{thm:specialpres}
	There exists an algorithm that, given a fundamental polygon of a co-compact Fuchsian
	group $\Gamma$ with its side pairing and an integer~$m\ge 0$, computes an
	$m$-handle presentation of $\Gamma$ of the form
	\begin{align*}
		\langle a_1,b_1,\ldots, a_m,b_m,&e_{2m+1}\ldots, e_{2g},\gamma_1,\ldots,
        \gamma_f\mid w\cdot \prod_{k=1}^f\gamma_k,\gamma_j^{\nu_j}\rangle
	\end{align*}
    where $w=\prod_{j=1}^m[a_j,b_j]w'$
    with~$w'$ a product of the~$e_j^{\pm 1}$, each
    appearing exactly once,
    and runs in time $\tilde O((m+1)\mu)$, where~$\mu$ denotes the co-area of~$\Gamma$. 
\end{thm}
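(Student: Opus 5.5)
The plan is to turn the combinatorial data of the fundamental polygon into a cell complex structure on the quotient orbifold surface $X=\Gamma\backslash\mathbb{H}$. We then simplify this structure by moves in the spirit of Imbert, rephrased in terms of groupoids and embedded graphs, until its fundamental group presentation has the stated shape. The polygon with its side pairing gives an embedded graph $G$ on $X$: its vertices are the vertex cycles, its edges are the paired sides, and it has a single face. Elliptic vertex cycles, or elliptic points at midpoints of sides paired to themselves, are the cone points. The fundamental groupoid of $X$ minus the cone points, based at the vertices of $G$, is generated by the edges. The group $\Gamma$ is recovered by choosing a spanning tree $T$ of $G$ and adding the relation $\gamma_j^{\nu_j}$ around each cone point. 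The number of sides and vertices is $O(\mu)$, so this initial data has size $O(\mu)$. Every word we produce will be stored as a straight line program in the original side-pairing generators. This keeps each rewriting step cheap even though the words may expand exponentially in length.

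First, we contract $T$. This is a linear-time operation on the rotation system: the remaining $2g+f'$ edges become loops at a single vertex, and the single face still gives a single relation. We then isolate the elliptic elements. For each cone point we cut out a small loop around it, so that the face boundary reads $w\cdot\prod_k \gamma_k$. Here $w$ is a word in the $2g$ loops of a one-vertex, one-face graph on the closed surface of genus $g$. Each letter of $w$ appears exactly twice, once with each sign. Such a word is encoded by a chord diagram (a fatgraph with a single boundary).

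The core step is a handle-extraction procedure applied $m$ times. Given the cyclic word $w$, we find two chords $x,y$ that interlace, i.e.\ $w = x A y B x^{-1} C y^{-1} D$. We then apply the classical Nielsen/handle-slide substitutions, replacing $x$ and $y$ by products of $x$, $y$ and the subwords $A,B,C,D$, so that the word becomes $[x',y']\,w''$ with $w''$ using the other letters once each in the required form. Each new generator is defined by a straight line program of length $O(1)$ on top of straight line programs for the subwords. Extracting the subwords and updating the cyclic word costs $\tilde O(\mu)$ if the cyclic word is stored in a balanced tree or rope that supports split and concatenate in logarithmic time. After $m$ extractions the total cost is $\tilde O((m+1)\mu)$. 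The remaining letters become the $e_j$. Because we stop after $m$ handles, $w'$ is only guaranteed to be a product of the $e_j^{\pm1}$ with each appearing once, matching the statement. Interlacing chords always exist as long as $g>m$, because a one-face fatgraph of positive genus contains a crossing pair of chords.

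The main obstacle is this handle-extraction step. The naive rewriting changes the letter counts of the remaining word, and I must check that after the substitutions the other letters each still appear exactly once in $w'$, with the commutator fully separated. Here the groupoid/embedded graph viewpoint helps. Each substitution is realized geometrically by cutting the surface along an edge and regluing (a handle slide), which keeps a one-vertex, one-face embedding. The new presentation is then automatically of the stated form, and correctness reduces to the invariance of $\pi_1$ under these moves. I would also need to locate interlaced chords in amortized polylogarithmic time. For this I would maintain positions of chord endpoints in the rope and test each candidate chord with an order-statistic query. The last bookkeeping point is that straight line programs are converted back to matrices in $\Gamma$ only in the complexity model of Theorem~\ref{thm:algopres}.
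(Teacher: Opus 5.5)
\textbf{Verdict: there is a genuine gap, in the elliptic (one-word) step rather than in the handle extraction.}

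Your handle-extraction step is essentially the paper's: Imbert's cut-and-paste recursion, with the new generators kept as straight line programs at $\tilde O(\mu)$ cost per handle. So the difficulty you single out is not where the proof is at risk. The gap is the step producing the one-word presentation $\langle e_1,\dots,e_{2g},\gamma_1,\dots,\gamma_f\mid w\prod_k\gamma_k,\ \gamma_k^{\nu_k}\rangle$ with the $\gamma_k$ given as explicit elements of $\Gamma$.

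As you set it up, the graph $G=\partial P/\sigma_1$ has the cone points among its vertices. So ``the fundamental groupoid of $X$ minus the cone points based at $V(G)$, generated by the edges of $G$'' does not make sense. Moreover, the side-pairing elements are not the edges of $G$ but the \emph{dual} edges $e^*$, which lift to paths between centres of adjacent translates of $P$. Likewise, the relations of $\Gamma$ come from the vertex cycles, i.e.\ the faces of the dual graph, not from the single face of $G$. Contracting a spanning tree of $G$ leaves exactly $2g$ loops by Euler characteristic, not $2g+f'$. It also merges all cone points into the one remaining vertex, so there is nothing left around which to ``cut out a small loop''. That sentence states the conclusion without giving a mechanism.

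What is needed is to work on the dual embedding $G^*\hookrightarrow S-\Gamma\backslash Y$. It has one vertex, edges mapping to the side pairings, and one face per vertex cycle with boundary loop $\delta_v$; then $\Gamma=\pi_1(G^*)/\mathcal N\langle\delta_v^{\nu_v}\rangle$. Your contraction of $T$ is, dually, the gluing of the faces of $G^*$ along $T$ into one polygon $P_T$. The key fact is Lemma~\ref{lem:mainlemma}: in the free group $\pi_1(G^*,v_0)$ the boundary word of $P_T$ equals $\prod_j g_j\delta_{h(j)}\bar g_j$. Here $h$ is the depth-first order on $T$ taken with the cyclic order induced by $\sigma_2$, and $g_j$ is the boundary path in $F_T$ from $v_0$ to $\delta_{h(j)}(0)$. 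The paper proves this by peeling off the last (leaf) face and inducting.

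This order and these conjugators are exactly what your proposal leaves unspecified, and they are also where quasi-linearity is decided. Each $g_j$ can have length of order $\mu$, so writing them all out is quadratic in the worst case. Building them in a single straight line program, each from its parent's conjugator plus a short boundary path (Algorithm~\ref{algo:gis}), costs $O(\mu)$. The same leaf-peeling re-expresses the tree edges in the new generators (Remark~\ref{rem:invslp}).

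Two smaller omissions:
\begin{itemize}
\item Your bound ``number of sides $=O(\mu)$'' needs proof. The paper supplies it in Lemma~\ref{lem:asymptotic}, via the area formula and an angle-counting argument.
\item You never say how the orders $\nu_j$ are determined. The paper reads them off vertex angle sums, at cost $O(\log\nu_j)$ each.
\end{itemize}
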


We then exploit these special presentations for cohomological computations, the
intuition being that~$\Gamma$ is almost a free product of cyclic groups. A
first idea is to use a geometric presentation ($m=g$ above), but
Theorem~\ref{thm:specialpres} only gives quadratic complexity for this case.
On the other hand, a one-word presentation ($m=0$) does not yield a sufficiently
structured system. A key insight is that a one-handle presentation ($m=1$) is
sufficient.
More precisely, in Section \ref{sec:cohomology} we prove the following theorem
(see Proposition~\ref{prop:computeH1} for a more precise statement).

\begin{thm}\label{thm:introcomputeH1}
	There exists an algorithm that, given a fundamental polygon for $\Gamma$
    with its side pairing
    and an integer~$k\ge 0$, computes an 
	explicit basis of 
	\[
      H^1(\Gamma, \Sym^k(\C^2))
    \]
    in time $\tilde O(\mu (k+1)^3)$.
\end{thm}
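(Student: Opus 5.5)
We will apply Theorem~\ref{thm:specialpres} with $m=1$. In time $\tilde O(\mu)$ this gives a one-handle presentation
\[
\langle a,b,e_3,\ldots,e_{2g},\gamma_1,\ldots,\gamma_f \mid [a,b]\,w'\prod_k\gamma_k,\ \gamma_j^{\nu_j}\rangle .
\]
Since the presentation comes out of the algorithm in straight-line form, every generator is given as a word of total size $\tilde O(\mu)$ in the side-pairing elements, so its matrix action on $V=\Sym^k(\C^2)$ can be evaluated. Write $n=\dim V=k+1$. We recall that $g$ and $f$ are $O(\mu)$.

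A $1$-cocycle $\phi$ is determined by its values on the generators, subject to two kinds of constraints. The torsion relations require $\phi(\gamma_j)\in\ker(1+\gamma_j+\dots+\gamma_j^{\nu_j-1})$. The long relation is the single linear equation obtained from Fox calculus:
\[
(1-aba^{-1})\phi(a)+(a-[a,b])\phi(b)+\sum_{j}c_j\,\phi(e_j)+\sum_k d_k\,\phi(\gamma_k)=0,
\]
where each coefficient $c_j,d_k$ is $\pm$ the image in $\GL(V)$ of a prefix of the relator.

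The key structural point is that each $e_j$ and each $\gamma_k$ occurs in the relator exactly once. We therefore treat $\phi(e_j)$ for $j\ge3$ and the $\phi(\gamma_k)$, each parametrised by a basis of the corresponding kernel, as free parameters. The long relation then becomes one $n$-dimensional linear condition:
\[
M\begin{pmatrix}\phi(a)\\ \phi(b)\end{pmatrix}=-\sum_j c_j\phi(e_j)-\sum_k d_k\phi(\gamma_k).
\]
Here $M=\bigl(1-aba^{-1}\ \ a-[a,b]\bigr)$ is an $n\times 2n$ matrix.

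The steps are as follows.
\begin{enumerate}[(i)]
\item Compute the prefix products incrementally along the relator, using $O(\mu)$ multiplications of $n\times n$ matrices, for a cost of $O(\mu n^3)$. Generator matrices that are themselves long words are obtained from the straight-line programs, with total cost again $\tilde O(\mu)$ matrix operations.
\item Compute bases of the kernels for the $\gamma_k$, at cost $O(n^3)$ each.
\item Solve the small $n\times 2n$ system by Gaussian elimination. The $\phi(a),\phi(b)$ are then given by a particular affine-linear solution plus $\ker M$.
\end{enumerate}
From this we obtain an explicit basis of $Z^1$, each element specified by its values on the generators.

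To pass to $H^1$ we quotient by the coboundaries $B^1$, the image of $v\mapsto(\gamma v-v)_\gamma$, which has dimension $\le n$. A complement is computed by projecting to a few coordinates: choose $O(n)$ generator slots, say those of $a,b$ and a few others, on which $B^1$ is detected, and reduce modulo $B^1$ in those coordinates, at cost $O(\mu n^2\cdot n)$. One could alternatively compute the dimension via Euler characteristic; the dimension formula for $H^1$, namely $(2g-2)n+\sum_k(n-\dim V^{\gamma_k})+2\dim V^\Gamma$, gives a consistency check.

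The main obstacle is making sure that the total cost is $\tilde O(\mu n^3)$ rather than $\tilde O(\mu^2)$. This depends on three things:
\begin{itemize}
\item the relator having length $O(\mu)$ after expanding the straight-line programs, or else evaluating the prefix products directly through the SLPs so that each step costs one matrix multiplication;
\item the output basis being presented compactly, as $O(\mu n)$ vectors each with $O(\mu)$ nonzero blocks, which could be quadratic. We would therefore output the basis implicitly: free parameters plus the formula for $\phi(a),\phi(b)$. This is an explicit basis whose size is linear in $\mu n\cdot n$;
\item controlling coefficient growth, which we handle by working with exact arithmetic over a number field of bounded degree, or modulo primes.
\end{itemize}
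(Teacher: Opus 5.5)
Your architecture is the paper's. You use a one-handle presentation, treat every non-handle generator as a parameter, and solve the long relation for $(\phi(a),\phi(b))$ through your $M$, which is exactly $\Phi_{a,b}$; this is Lemma~\ref{lem:Z1onehandle}. But the step everything rests on is never justified. Step (iii) presumes that $M\bigl(\phi(a),\phi(b)\bigr)=-\sum_j c_j\phi(e_j)-\sum_k d_k\phi(\gamma_k)$ is solvable for \emph{every} choice of the parameters, i.e.\ that $\Phi_{a,b}$ is surjective. If it is not (for instance $M=0$ when $V$ is trivial), the $\phi(e_j),\phi(\gamma_k)$ are not free: they must make the right-hand side land in $\im M$, and your basis of $Z^1$ is wrong. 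Your Euler-characteristic check does not help, since it only constrains the rank of the whole relation map, not of $M$. Establishing this surjectivity is the real content of the paper's proof, in three steps (Lemmas~\ref{lem:zariskidense}, \ref{lem:irred}, \ref{lem:injsurj}):
\begin{enumerate}
\item For $k>0$ even, two non-commuting hyperbolic elements of a co-compact Fuchsian group generate a Zariski-dense subgroup of $\SL_2$. Finite, Borel and torus-normaliser closures are excluded because $a,b$ have infinite order, there are no parabolics, and $a,b$ do not commute.
\item Hence $\langle a,b\rangle$ acts irreducibly on $\Sym^k(\C^2)$.
\item The images $W_1=\im(1-aba^{-1})$ and $W_2=\im\bigl(a(1-ba^{-1}b^{-1})\bigr)$ have codimensions $\dim V^b\le 1$ and $\dim V^a\le 1$. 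If $W_1+W_2\ne V$, this would force $W_1=W_2$ with $a^{-1}W_1$ stable under $\langle b,ba^{-1}b^{-1}\rangle=\langle a,b\rangle$, contradicting irreducibility.
\end{enumerate}

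The passage to $H^1$ is likewise not an argument. You do not show that a bounded set $S$ of slots detects $B^1$; that requires $V^{\langle S\rangle}=0$, again an irreducibility-type statement. Nor do you say how to extract an explicit complement. Reducing modulo $B^1$ gives a spanning set with $\dim B^1$ redundant vectors. Worse, a coboundary $\gamma\mapsto(\gamma-1)v$ is nonzero in essentially every slot, and all of your basis vectors are nonzero in the slots of $a,b$ that you propose to use. Subtracting coboundaries from them therefore destroys the $O(1)$-block sparsity and makes the output quadratic in $\mu$. Outputting $Z^1$ implicitly, as you then suggest, does not give the $H$ with $Z^1=B^1\oplus H$ demanded by Definition~\ref{def:cohom}.

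The paper instead uses a \emph{second} handle $c=e_3$, $d=e_4$. The same lemma makes $\Psi_{c,d}$ injective, so $B^1$ is detected in slots $3,4$. Restricting $(\phi(c),\phi(d))$ to a complement of $\im\Psi_{c,d}$ in $V^2$ (the space $N_{3,4}$ of Lemma~\ref{lem:H1onehandle}) gives a complement of $B^1$ whose basis vectors still have $O(1)$ nonzero blocks. This needs $g\ge 2$, and your proposal is silent on small genus: for $g=0$ there is no handle at all, and for $g=1$ no second one. The paper disposes of these cases by the Selberg--Zograf finiteness of congruence arithmetic groups of genus $\le 1$, using direct linear algebra there.

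Two minor points. Each $e_j$ occurs once with each sign in $w'$, so $c_j$ is a difference of two prefix images; this is harmless. The elliptic kernels need binary powering, costing $O(\log\nu_j)=O(\log\mu)$ products for arithmetic groups, or can be obtained directly as $\im(\rho(\gamma_j)-1)$.
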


By ``computing an explicit basis'' of this finite-dimensional vector space, we
mean more than computing its dimension; see Definition~\ref{def:cohom} for a
precise definition: the basis is computed in a form suitable for cohomological
operations such as Hecke operators.
We implemented our algorithms in PARI/GP~\cite{PARI2}, so far in the case~$k=0$,
and compared them to the existing Magma~\cite{magma} implementation.
The experimental results are presented in Section~\ref{sec:implem}.

\subsection*{Notations and conventions}
We write~$\langle g_1,\dots, g_n\rangle$ for the group generated by
elements~$g_1,\dots, g_n$ and we write~$\langle g_1,\dots, g_n \mid r_1,\dots,r_m\rangle$
for the finitely presented group with generators~$g_i$ and relations~$r_j$.
When a group~$G$ acts on a space~$V$, for $g\in G$ and $v\in V$
we denote by $g.v$ the image of $v$ by $g$. For a subset $W$ of
$V$ we denote by $g.W$ the set $\{g.w| w\in W\}$ and by $G.W$
the set $\cup_{g\in G}g.W$. We denote by~$V^G$ the set of fixed points
of~$G$; for~$g\in G$, we abbreviate~$V^{\langle g  \rangle} = V^g$. 
Let~$\omega$ be an admissible exponent for linear algebra, i.e. such that matrix
multiplication of dimension~$n\times n$ can be performed using~$O(n^\omega)$
field operations. Lastly, given a topological space $X$ and a subset 
$S$ of $X$, we denote by $\mathring S$ (resp. $\partial S$) the
interior (resp. boundary) of $S$ in $X$.
We use the soft O notation: $g = \tilde O(f)$ if there exists~$a>0$ such that~$g =
O(f \log(2+|f|)^a)$.

\subsection*{Acknowledgements}
The authors express their gratitude towards the referees, whose remarks helped
improve the exposition of the paper.
We also thank Vincent Delecroix and Jean-Marc Couveignes for enriching
discussions around this project.
This research was funded by the ANR AGDE project (ANR-20-CE40-0010) and by the
European Union under Horizon Europe research and innovation programme, Grant
Agreement 101169527 (COGENT).

\section{Preliminaries}
\subsection{Presentations from group actions}\label{subsec:presfromga}
Let $\Gamma$ be a topological group and $X$ be a Hausdorff locally compact topological
space. We say a left action of $\Gamma$ on $X$ is \emph{properly discontinuous} if
$\Gamma$ is discrete and for every $x\in X$ there exists some neighborhood $V$ of $x$
such that the set $\{g\in \Gamma\mid gV\cap V\ne \emptyset\}$ is finite. Further, the action
is said to be \emph{totally discontinuous} if it is also free. The next
theorem shows how to reduce the computation of presentations for groups
acting properly discontinuously to the computation of presentations for 
fundamental groups. 
\begin{thm}\label{thm:devissage}
	 Given a totally discontinuous action of $\Gamma$ on $X$,
	 the projection map 
	$p\colon X\to \Gamma\backslash X$ is a normal covering map.
    Let $\Gamma(p)$ be
	its automorphism group; then $\Gamma(p)\simeq \Gamma$, and the sequence
	\[
      1\to p_*(\pi_1(X,x))\to \pi_1(\Gamma\backslash X,p(x))\to \Gamma(p)\to 1
    \]
	is exact.
\end{thm}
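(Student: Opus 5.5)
The plan follows the classical covering space argument in three steps: show that $p$ is a covering map, identify the deck group with $\Gamma$, and then invoke the Galois correspondence for coverings. We assume, as is implicit in the statement, that $X$ is path-connected and locally path-connected. This is needed for the fundamental group and the lifting criteria to make sense.

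\textbf{Step 1: $p$ is a covering.} Fix $x\in X$. Proper discontinuity gives a neighbourhood $V$ of $x$ such that only finitely many $g\in\Gamma$ satisfy $gV\cap V\neq\emptyset$; call them $g_1=1,g_2,\dots,g_n$. Freeness gives $g_ix\neq x$ for $i\ge2$. Since $X$ is Hausdorff, we can shrink $V$ to a neighbourhood $U\subseteq V$ with $g_iU\cap U=\emptyset$ for $i\ge 2$. To do this, separate $x$ from each $g_ix$ by disjoint open sets $A_i\ni x$ and $B_i\ni g_i x$. Then set $U=V\cap\bigcap_i (A_i\cap g_i^{-1}B_i)$. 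It follows that $gU\cap U=\emptyset$ for every $g\neq 1$.

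With this $U$, the translates $gU$ are pairwise disjoint, and $p^{-1}(p(U))=\bigsqcup_g gU$. The map $p$ is open, because $p^{-1}(p(W))=\bigcup_g gW$ is open for $W$ open. On each $gU$, $p$ is a continuous open bijection onto $p(U)$, hence a homeomorphism. So $p(U)$ is evenly covered.

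\textbf{Step 2: $\Gamma(p)\simeq\Gamma$.} Each $g\in\Gamma$ acts by a homeomorphism commuting with $p$, so we get a homomorphism $\Gamma\to\Gamma(p)$. This map is injective because the action is free.

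For surjectivity, take a deck transformation $\phi$. The points $\phi(x)$ and $x$ lie in the same fibre, so $\phi(x)=gx$ for some $g$. Then $g^{-1}\phi$ is a deck transformation fixing $x$. By uniqueness of lifts for connected covers, it is the identity, so $\phi=g$.

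As a consequence, $\Gamma(p)$ acts transitively on every fibre, since the fibres are exactly the $\Gamma$-orbits. This means the covering is normal.

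\textbf{Step 3: the exact sequence.} Use the standard construction. Given a loop $\sigma$ at $p(x)$, lift it to a path $\tilde\sigma$ starting at $x$. Its endpoint is $g_\sigma x$ for a unique $g_\sigma$, by freeness. Define $\pi_1(\Gamma\backslash X,p(x))\to\Gamma$ by $[\sigma]\mapsto g_\sigma$.
\begin{itemize}
\item \emph{Well defined:} homotopy lifting shows $g_\sigma$ depends only on the class $[\sigma]$.
\item \emph{Homomorphism:} the lift of $\sigma\tau$ is $\tilde\sigma$ followed by $g_\sigma\tilde\tau$, which ends at $g_\sigma g_\tau x$.
\item \emph{Surjective:} given $g$, push a path in $X$ from $x$ to $gx$ down by $p$; this uses path-connectedness of $X$.
\item \emph{Kernel:} $g_\sigma=1$ exactly when the lift is closed, i.e.\ when $[\sigma]\in p_*\pi_1(X,x)$.
\item \emph{Injectivity on the left:} $p_*$ is injective by homotopy lifting.
\end{itemize}

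The main point needing care is Step 1. Hausdorffness and freeness must be combined correctly to upgrade proper discontinuity to an evenly covered neighbourhood, and that is where every hypothesis gets used. The rest is textbook covering space theory.
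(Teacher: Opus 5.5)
Your proof is correct and is essentially the standard covering-space argument of Hatcher (Propositions 1.39 and 1.40), which is all the paper's proof consists of (a citation); your Step 1 usefully spells out a reduction the citation leaves implicit, namely from the paper's notion of proper discontinuity together with freeness and the Hausdorff property to Hatcher's hypothesis that each point has a neighbourhood with pairwise disjoint $\Gamma$-translates. The path-connectedness you add is genuinely required (the paper imposes it just after the statement, and without it $\Gamma(p)\simeq\Gamma$ can fail), whereas local path-connectedness is never used, because your direct construction of $\pi_1(\Gamma\backslash X,p(x))\to\Gamma$ via endpoints of lifts bypasses the lifting criterion on which Hatcher's Proposition 1.39 relies.
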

\begin{proof}
	A proof is given in \cite[Proposition 1.39; Proposition 1.40]{hatcher}.
\end{proof}
\begin{rem}\label{rem:globalmonodromy}
	The surjective map 
	$\pi_1(\Gamma\backslash X, \Gamma.x)\to \Gamma$ is defined
	as follows: every loop $\gamma$ at $p(x)$ lifts 
	to a unique path $\tilde \gamma$ such that $\tilde \gamma(1)=x$ and there
	is some $g_\gamma\in \Gamma$ such that $\tilde \gamma(0)=g_\gamma.x$.
    We then associate~$\gamma \mapsto g_\gamma$.
\end{rem}
We now assume $X$ is path connected and we fix a properly discontinuous action of $\Gamma$ on $X$. Let $Y$ be the set of points
of $X$ with non trivial stabilizer. Then
\[ X-Y\to \Gamma\backslash (X-Y) \]
is a covering map and $\Gamma$ acts totally discontinuously on
$X-Y$.
In addition, if we assume that $Y$ is discrete 
and $X-Y$ stays path connected then the inclusion map 
$X-Y\subset X$ induces a surjective morphism
$\pi_1(X-Y,x)\to \pi_1(X,x)$
with kernel generated by conjugacy classes of simple loops around
points of $Y$. These loops are usually easy to describe.

When $X=\h$ is the upper-half plane and $\Gamma$ is a Fuchsian
group (see Section~\ref{subsec:fuchsian}), the quotient space has the structure of a good
compact complex $1$-orbifold \cite[Chapter 34.8.14, Chapter 3.6]{quatbook, katok} so
in particular of a compact orientable topological surface. In this case Brahana 
\cite{brahana}
gave combinatorial tools to compute various presentations of 
their fundamental groups that we introduce in Section 
\ref{subsec:fgge}.

\begin{rem}\label{rem:orblooporder}
 In \cite[Chapter 13]{Thurston} Thurston defines the
 \emph{orbifold} fundamental group $\pi_1^{orb}$ as the
 automorphism group of
the covering $X-Y\to \Gamma\backslash (X-Y)$ which is $\Gamma$ by
the last proposition. 
In the simplest case of the covering $D-\{0\}\to (\Z/n\Z)\backslash (D-\{0\})$
where~$D$ is the unit disk,
the simple loop around $0$ maps to $n$ times the simple loop
around $0$ in the quotient. In this case Theorem \ref{thm:devissage} gives $\pi_1^{orb}((\Z/n\Z)\backslash D,
  1/2)\simeq\langle \gamma \mid \gamma^n\rangle$. 
\end{rem}

\subsection{Fuchsian groups}\label{subsec:fuchsian}
In this section we introduce Fuchsian groups. A standard reference is \cite[Chapters 3-4]{katok}.

	We denote by $\h$ the upper-half plane. We let
$\PSL_2(\R)=\SL_2(\R)/\{\pm 1\}$ be the projective special linear
group over $\R$. It acts properly on $\h$ by homographies and
can be identified with the group of orientation preserving
isometries of $\h$ through this action. 

  A \emph{Fuchsian group} is a discrete subgroup of $\PSL_2(\R)$.
As subgroups of $\PSL_2(\R)$, Fuchsian groups act properly
discontinuously on $\h$ so that we will be able to apply 
Section \ref{subsec:presfromga}. 
For simplicity we will stick to co-compact Fuchsian groups, 
that is, Fuchsian groups $\Gamma\subset \PSL_2(\R)$ such that $\Gamma\backslash \h$ is compact. From now on we fix
$\Gamma\subset \PSL_2(\R)$ a co-compact Fuchsian group.

A powerful tool to study them via their action on $\h$ is the theory of fundamental
domains.
\subsection{Fundamental domains.}\label{subsec:fdom}
We define a \emph{polygon} $P\subset \h$ to be a hyperbolically convex closed domain with
finite
non-zero area such that its boundary is a finite union of maximal
geodesics of $P$.

  Each pair of edges either have empty intersection
  or intersect in a point that we call a vertex. We write
  $E(P)$ (resp. $V(P)$) for the set of edges (resp. vertices) of
  $P$.
  The edges and vertices for any finite union of polygons are defined in the same way. 
  A closed domain $O\subset \h$ in the
  upper-half plane is a \emph{fundamental domain} for $\Gamma$ 
if  $\Gamma.O=\h$ and for all $g\neq 1\in \Gamma$, 
we have $g\mathring O\cap \mathring O=\emptyset$. 

By \cite[Theorem 4.1.1]{katok}, there always exists
a fundamental domain $P$ for $\Gamma$ which is a
polygon. In this case we will call $P$ a fundamental polygon 
for $\Gamma$.

\subsection{Side pairings.}\label{subsec:sidepairings}
	We fix a fundamental polygon $P$ for $\Gamma$. Define a subset
	$R=\{g\in\Gamma\mid g^{-1}.P\cap P\ne \emptyset \}$ of $\Gamma$ associated
    with $P$. For any $g\in R$, write $e=g^{-1}P\cap P$. Then $e$ is 
	a maximal geodesic in $\partial P$.
	When $g.e\ne e$ we call $e$ an edge of $P$. When $g.e=e$, the element~$g$
    has order two 
	and has a fixed point $v$ at the middle of $e$.
    Therefore $e=e_1\cup e_2$ is a union of
	geodesic segments intersecting at $v$ and such that $e_1=g.e_2$. In this case
	instead of $e$ we call $e_1$ and $e_2$ edges of $P$.

	Let $E$ be the set of edges of $P$. For each
	$e\in E$ there is a unique $g_e\in R$ such that 
	$g_e.e\in E$,
	namely the only $g\in \Gamma$ such that 
	$g^{-1}.P\cap P=e$. We write $\bar e=g_e.e$, and with the 
	previous convention we have $\bar e\ne e$ and 
	$\bar{ \bar e} =e$ for all edges $e\in E$. The
map $\sigma_1\colon e\mapsto \bar e$ is then a
fixed-point free involution of $E$. Lastly let 
	$\varphi\colon E\to R$ be the map $e\mapsto g_e^{-1}$. 

	The pair $(\sigma_1, \varphi)$
	is called the \emph{side pairing} of $\Gamma$ attached to $P$.
We also denote by $\sigma_2$ the permutation of the edges of 
$P$ in clockwise orientation and $\sigma_0=\sigma_1\sigma_2^{-1}$.
\begin{thm}[Side pairings generate Fuchsian groups]\label{thm:sidepairingpres}
	The set $R$ is a set of generators for $\Gamma$ and if
	$\{c_v\}_v$ is the set of cycles of $\sigma_0$
	then $\{c_v^{\nu_v}\}$ is a complete set of relations
	for $\Gamma$, where $\nu_v$ is the order of $c_v$ in 
	$\Gamma$.
\end{thm}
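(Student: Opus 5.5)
This is Poincaré's polygon theorem; I plan to derive it from Theorem~\ref{thm:devissage} and Remark~\ref{rem:orblooporder}, applied to the action of $\Gamma$ on $\h$ with $Y$ the (discrete) set of elliptic fixed points.

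\textbf{Step 1: a cell structure on the quotient.} Translates of $P$ tile $\h$. I first arrange, possibly after subdividing edges through the midpoints of order-two elements as in the definition of edges, that every elliptic fixed point is a vertex of the tiling. The quotient $\Gamma\backslash\h$ is then the polygon $P$ with edges glued by $\sigma_1$ via the maps $g_e$, and its vertices are the orbits of $V(P)$. By construction, the cycles of $\sigma_0=\sigma_1\sigma_2^{-1}$ correspond to these vertex classes: walking around a vertex $v$ of the tiling, we alternately cross an edge of $P$ (apply $\sigma_2^{-1}$ to move to the adjacent edge at the same corner) and jump to its paired edge (apply $\sigma_1$). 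The product of the corresponding $\varphi(e)$ along the cycle is the element $c_v$, which fixes $v$.

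\textbf{Step 2: the punctured quotient.} Let $Q=\Gamma\backslash(\h-\Gamma.V(P))$, which is $\mathring P$ with the open edges glued and the vertex classes removed. Its fundamental group, based at a point $x\in\mathring P$, is free. Take as generators the loops that go from $x$ straight across edge $e$ and return from $\bar e$. By Remark~\ref{rem:globalmonodromy} this loop maps to $\varphi(e)$ or its inverse, and the loops for $e$ and $\bar e$ are mutually inverse. No relations are needed: $Q$ deformation retracts onto a graph with one vertex (dual to $P$) and one loop per pair of edges. Theorem~\ref{thm:devissage} applied to the free action on $\h-\Gamma.V(P)$ therefore gives a surjection from this free group onto $\Gamma$. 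This shows that $R$ generates $\Gamma$, and that the kernel is $p_*\pi_1(\h-\Gamma.V(P))$.

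\textbf{Step 3: the kernel.} Since $\h$ is simply connected and the removed set is discrete, $\pi_1(\h-\Gamma.V(P))$ is normally generated by small loops around the removed points. The projection of a small loop around a lift of vertex class $v$ is $\nu_v$ times the simple loop around $v$ in $Q$, as in Remark~\ref{rem:orblooporder}, where $\nu_v$ is the order of the stabilizer. In the free group, that simple loop is exactly the word $c_v$ read off from the $\sigma_0$-cycle, since circling $v$ crosses the edges in that order. Hence the kernel is normally generated by the $c_v^{\nu_v}$, which gives the presentation. Here $\nu_v$ is the order of $c_v$ because $c_v$ generates the stabilizer: the stabilizer acts simply transitively on the copies of $P$ around $v$ up to the cycle length.

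\textbf{Main obstacle.} The delicate step is Step~3. I have to check that the loop around $v$ in $Q$ really equals the cyclic word $c_v$, with the correct orientation and conjugation, and that $c_v$ generates $\operatorname{Stab}(v)$. I will handle this by lifting the loop around $v$ and tracking the sequence of tiles $P, \varphi(e_1)P, \dots$ around $v$, using Remark~\ref{rem:globalmonodromy}. Alternatively, I can simply cite Katok, Theorem~4.3.2 (Poincaré).
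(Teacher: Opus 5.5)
Your proposal is correct and follows essentially the route the paper indicates. Besides citing Katok, the paper's proof points to Theorem~\ref{thm:devissage} and the surface computations of Section~\ref{subsec:fgS}, carried out concretely in Section~\ref{subsec:setting}, and this is exactly your argument: the punctured quotient retracts onto the one-vertex dual graph $G^*$, giving a free group on the side-pairing loops that surjects onto $\Gamma$, with kernel normally generated by images of small loops around lifted vertices, which project to $\nu_v$-th powers of the vertex-cycle words $c_v$; your check that $c_v$ acts as a rotation by the cycle's angle sum $2\pi/\nu_v$, so that its order equals the stabilizer order, correctly reconciles the two meanings of $\nu_v$.
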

\begin{proof}
	\cite[Theorem 3.5.4]{katok}. See also Part \ref{subsec:fgS} and Theorem \ref{thm:devissage}.
\end{proof}
In particular, Fuchsian groups can be specified by
the data of a fundamental domain and a side pairing.

\subsection{Arithmetic Fuchsian groups}
Some general references for this section are \cite{vigneras, katok, quatbook}. 
Let $F$ be a totally real number field of degree $n=[F:\Q]$ over $\Q$ and let $B$ be
a quaternion algebra over $F$, that is a central simple $F$-algebra of dimension $4$.
Any such algebra can be specified by two generators $i,j\in B$ such that $i^2=a\in F$,
$j^2=b\in F$ and $ij=-ji$, we then write $B=\left(\frac{a,b}{F}\right)$.
For every place $v$ of $F$, denote by $F_v$ the completion of $F$ at $v$. The algebra
$B$ is said to split at $v$ if $B\otimes_F F_v\simeq M_2(F_v)$ and ramified 
otherwise.
An \emph{order} $\Or$ of $B$ is a $\Z_F$-lattice of $B$ that is also a subring.
It is \emph{maximal} if it is not properly contained in any other order. Denote by
$\Or^1$ the group of units of reduced norm $1$ in $\Or$. 
Let $r$ be the number of ramified infinite places and $s$ the number of split
infinite places. We have $r+s=n$ and there is an isomorphism
$B\otimes_{\Q} \R\to \mathbb H^r\times M_2(\R)^s$
where $\mathbb H=\left(\frac{-1,-1}{\R}\right)$ 
which induces an embedding 
$\iota\colon \Or^1\to (\mathbb H^1)^r\times \SL_2(\R)^s$.
Denote by $p\colon \mathbb H^r\times M_2(\R)^s\to M_2(\R)^s$ the projection.
The group $p\circ\iota(\Or^1)$ acts diagonally by orientation preserving isometries
on $\h^s$.
The group $p\circ\iota(\Or^1)$ is discrete, and co-compact if~$B$ is a division
algebra \cite[Proposition 38.3.8, Main Theorem 38.4.2]{quatbook}.

When $s=1$, this means that $\Or^1$ is a co-compact
Fuchsian group. We can now define arithmetic Fuchsian groups.

Two groups $H$ and $\Gamma$ are \emph{commensurable} if $H\cap
\Gamma$ has finite index in both $H$ and $\Gamma$. 
An \emph{arithmetic Fuchsian group} is a Fuchsian group which is
commensurable with $p\circ\iota(\Or^1)$ for some $F,B,\Or$ as above. 
\begin{thm}\label{thm:algofdom}
	There exists an algorithm 
	that, given an arithmetic Fuchsian
	group $\Gamma$ identified by arithmetic data, returns
	a fundamental polygon and a side pairing for $\Gamma$,
	heuristically in time $O(\mu^2)$, where $\mu=\mu(\Gamma\backslash \h)$
	is the co-area of~$\Gamma$. 
\end{thm}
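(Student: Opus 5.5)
The plan is not to build a new algorithm but to assemble the statement from the known algorithms of Voight (\emph{Computing fundamental domains for Fuchsian groups}) and Page (\emph{Computing arithmetic Kleinian groups}), specialised to $s=1$, and to check that their cost is $O(\mu^2)$ under the usual heuristic. The input is the arithmetic data $(F,B,\Or)$, possibly with a congruence level. Everything rests on one stopping criterion: the co-area $\mu$ is known in advance. For $\Or^1$ it is given by Shimizu's formula,
\[
  \mu = \frac{4\,d_F^{3/2}\,\zeta_F(2)}{(4\pi^2)^{n}}\prod_{\p\mid \mathrm{disc}(B)}(N\p-1)
\]
up to the usual normalisation, and for a congruence subgroup one multiplies by the index. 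So the algorithm can certify that a candidate domain is a fundamental domain by comparing hyperbolic areas.

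First, fix a base point $p\in\h$ with trivial stabiliser and work with the Dirichlet domain or, as Voight does, the exterior domain relative to the unit disc model. Elements $\gamma\in\Or^1$ whose isometric circle has large radius are exactly those for which a positive definite quadratic form $Q_p(\gamma)$ on the lattice $\Or$ is small. This form combines the archimedean contribution of the split place with the ramified places, which are compact. Therefore short vectors can be enumerated with Fincke--Pohst, or sampled nearly uniformly in a ball.

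Second, maintain a finite set $G\subset\Gamma$ and its normalized boundary, that is, the exterior domain $E(G)$ as a cyclically ordered list of sides. At each step:
\begin{enumerate}[(i)]
\item add newly found elements of small $Q_p$ to $G$;
\item run the reduction algorithm, which reduces points of $\h$ to $E(G)$ and reduces elements of $G$ against $G$, both to find missing side pairings and to discard redundant generators;
\item compute the area of $E(G)$ and stop once it equals $\mu$ and every side is paired.
\end{enumerate}
Correctness then follows from Poincaré's polygon theorem. The output is a polygon $P$ whose side pairing $(\sigma_1,\varphi)$ satisfies the cycle conditions, so by Theorem~\ref{thm:sidepairingpres} it is a fundamental polygon with its side pairing. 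Non-maximal orders and congruence subgroups are handled by coset enumeration over $\Or^1$, or by using an Eichler order directly.

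Third, for the complexity, the number of sides of a fundamental polygon is $O(\mu)$, by the Gauss--Bonnet relation between area, genus and number of elliptic points. Inserting one element into a normalized boundary of size $O(\mu)$ costs $O(\mu)$ operations on numbers of bounded precision. Heuristically, treating enumerated short elements as equidistributed, $O(\mu)$ elements suffice before the area reaches $\mu$, each being found in expected polynomial-in-$\log\mu$ time. This gives $O(\mu^2)$.

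The main obstacle is this last step. The number of iterations before the domain closes up cannot be proved to be $O(\mu)$; it relies on an equidistribution heuristic for the sampled elements, and on precision growth staying logarithmic in $\mu$. That is precisely why the statement says ``heuristically''. I would simply cite Voight's and Page's analyses for this point rather than attempt a rigorous bound.
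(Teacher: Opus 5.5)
Your sketch matches what the paper does: its proof is simply a citation of Rickards's Algorithm~3.1 \cite{rickards}, which refines exactly the Voight--Page exterior-domain method you describe (area-based termination, heuristically $O(\mu)$ elements each inserted into an $O(\mu)$-sided normalized boundary), so Rickards, rather than Voight and Page, is the reference to invoke for the heuristic $O(\mu^2)$ bound. One small slip: deducing that $P$ is a fundamental polygon from the cycle conditions is Poincaré's theorem together with your area check, not Theorem~\ref{thm:sidepairingpres}, which presupposes a fundamental polygon and goes in the opposite direction.
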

\begin{proof}
	\cite[Algorithm 3.1]{rickards}.
\end{proof}
%

By Sections~\ref{thm:sidepairingpres} and \ref{thm:algofdom}, given an
arithmetic Fuchsian group identified by arithmetic data we can
compute a finite set of generators attached to a fundamental
domain and a side pairing identifying our group.
This identification of an arithmetic Fuchsian group is already
very convenient, but the given presentation is not the best one for
computations.

\subsection{Signature of a Fuchsian group.}\label{subsec:sig}
    Let $(C_i)_{i=1,\ldots,f}$ be the conjugacy classes of 
	maximal finite subgroups of $\Gamma$.
	The signature of $\Gamma$ is a tuple 
	$(g;\nu_1,\ldots,\nu_f)$ where
	$g$ is the genus of the surface $\Gamma\backslash \h$ and
	$\nu_i$ is the order of any maximal 
	subgroup in $C_i$ for every~$i$.


\begin{ex}[An arithmetic Fuchsian group with signature (1;3)]\label{ex:congsg13}
	Inputing $F=\Q(\sqrt 8)$, 
	$B=\left(\frac{-\sqrt{2}, -3}{F}\right)$ and 
	$\eta=\Z_F$ in Algorithm 3.1 of \cite{rickards} yields a 
	fundamental polygon as in Figure \ref{fig:fdom} for the
	arithmetic Fuchsian group~$\Gamma$ corresponding to some maximal order.
	It also outputs a side pairing $(\sigma_1,\varphi)$ on the edges 
	$E=\{a,~b,~c,~d,~e,~\bar a,~\bar b,~\bar c,~\bar d,~\bar e\}$
	such that $\sigma_2=(a b c d \bar b e \bar c \bar d \bar e \bar a) $. 
	Then, $R=\varphi(E)$ is a generating family for $\Gamma$
	and computing the cycle decomposition
	\[\sigma_1\circ\sigma_2^{-1}=(a)(b\bar a e)(c\bar b\bar d)(d\bar c\bar e) \]
	gives a presentation 
	\[ \Gamma\simeq \langle a,b,c,d,e\mid a^n,~ba^{-1}e,~cb^{-1}d^{-1}, dc^{-1}e^{-1}\rangle \]
	for $n=|a|_\Gamma$. The algorithm also outputs the signature 
	$(1;3)$ so that $n=3$. Then an explicit
	geometric presentation as in Theorem \ref{thm:specialpres} will be described in 
	Example~\ref{ex:geomprescongsg13}.
\end{ex}

\begin{figure}
	\centering
	\includegraphics[width=0.3\textwidth]{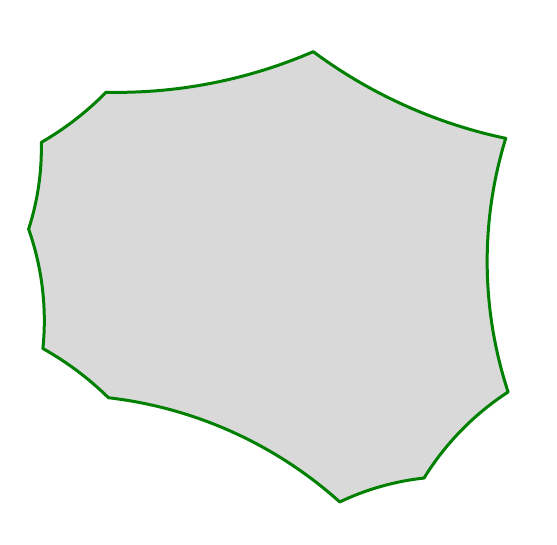}
	\caption{A fundamental domain for $\Gamma$.}
	\label{fig:fdom}
\end{figure}

We now introduce groupoids as
they allow us to express the spatial
component of the geometric presentations of 
$\pi_1(\Gamma\backslash(\h-Y))$ and are crucially used in the proofs of
Sections \ref{subsec:fgge} and \ref{sec:algos}.
Readers only interested in the application to cohomology 
computations may safely go to Section \ref{sec:cohomology}.

\subsection{Graphs}\label{subsec:graphs}
  A \emph{graph} $G$ is a compact
$1$ dimensional CW-complex. A $1$-cell of $G$ is called an
\emph{edge} and a $0$-cell is called a \emph{vertex}. We denote
by $E(G)$ (resp. $V(G)$) the set of edges of $G$ (resp. vertices)
of $G$. The gluing is represented by a map $\phi\colon 
E(G)\to V(G)\times V(G)$ where for each $e\in E$ the gluing
map $g\colon \{v,v'\}=\partial e\to V(G)$ is such that 
$\phi(e)=(g(v),g(v'))$. A map of graphs is a map of CW-complex.
A subset of edges $S$ of a graph $G$ naturally defines a 
subgraph of $G$ that we may denote again by $S$.

\subsection{Groupoids.}\label{subsec:groupoids}
Two basic references for this section are \cite{higgins, brown}. The language of category theory
is used freely and we take the ``fundamental group convention''
that arrows compose from left to right.

A groupoid $\G$ is a category where every morphism is an isomorphism.
A subgroupoid of a groupoid is a subcategory which is also a groupoid.
A morphism of groupoids is a functor of the underlying categories,
it is injective (resp. surjective, resp. bijective) if object and 
morphism maps are. We denote by $\Ob(\G)$ and $\Mor(\G)=\cup_{x,y\in \Ob(\G)} \G(x,y)$
the sets of objects and morphisms of $\G$ respectively, where $\G(x,y)$
denotes morphisms from $x$ to $y$.



\subsection{Fundamental groupoids.} \label{subsec:fgroupoid}
	The \emph{fundamental groupoid} $\pi_1(X, A)$ of a topological space $X$ with
	respect to a subset $A$ is the category
with objects $A$ and morphisms homotopy classes of paths between points of $A$.
The set $A$ is \emph{representative} in $X$ if it meets every connected components
of $X$. If $f\colon X\to Y$ is a continuous map we denote by $f_*$
the induced morphism $\pi_1(X, A)\to \pi_1(Y, f(A))$.
When $X$ is a graph we define the fundamental groupoid of $X$
to be $\pi_1(X, V(X))$.

\subsection{Groupoids to graphs and vice versa.}\label{subsec:ajdonction}
	From a groupoid $\G$ we can recover a graph $G=\Forget(\G)$ as follows:
    let $V(G)$ be $\Ob(G)$, let $E(G)$ be the set of morphisms
of $\G$ modulo inversion and without the 
identities, and let the gluing maps be given by source and target maps.
From a graph $G$, we can recover a groupoid by the fundamental 
groupoid functor.

The fundamental groupoid and forgetful functors are respectively left and right
adjoints. That is, $\pi_1(\_,V(\_))$ is the analogue of the 
free group functor for groupoids, where sets are replaced by
graphs as the underlying concrete category. For this reason
we also write $\Free(G)=\pi_1(G,V(G))$ for a graph $G$. If
$S$ is a subset of edges of $G$ we write $\Free(S)$ for the
smallest subgroupoid of $\Free(G)$ containing $S$.
\begin{rem}\label{rem:adj}
	This can be proven using the Van Kampen theorem
for groupoids that we introduce later and shows in particular that 
$\pi_1(\bigvee_{i=1}^f S^1, \bullet)=\Z^{*f}$ is the free group 
on $f$ generators.
\end{rem}
Each edge $e$ of a graph $G$ corresponds to exactly 
two paths in $\pi_1(G, V(G))$ that we call \emph{darts}. We
denote by $D(G)$ the set of darts of $G$. If $v$ 
is a vertex of $G$, then a dart $d$ is incident to $v$ if 
$d(1)=v$.

We now want to define presentations and kernels for groupoids.
\begin{rem}\label{rem:needuniversal}
Let $F\colon \G\to \Fs$ be a functor and $f$ and $g$ be two arrows
in $\G$.
It may happen that $F(f)F(g)$ is an identity in $\Fs$ but
$f$ and $g$ do not compose in $F$. To make sense of the 
expression $fg$ as a relation for $\Fs$, we need universal 
groupoids, which are introduced in the next section.
\end{rem}
\subsection{Universal groupoids.}\label{subsec:univgroupoids}
	Let $\G$ be a groupoid
	and let $h\colon \Ob(\G)\to X$ be a map of sets.
    Two morphisms $f,g\in \Mor(\G)$ \emph{eventually compose} in $X$ if 
    $h(s(g))=h(t(f))$ where $s$ and $t$ are source and target maps.
Define the \emph{universal groupoid $\mathcal U_h$ with respect to $h$} to be
the groupoid with object set $X$ and with morphisms the words of morphisms
in $\G$ that eventually compose in $X$.
If $f$ and $g$ already
compose in $\G$, then the words $(fg)$ and $(f,g)$ are
identified.
The map~$h$ induces a morphism of groupoids~$\G \to \mathcal U_h$.
For a functor $F\colon \G\to \F$, the universal groupoid associated
with $F$ is defined to be the universal groupoid associated with
$\Ob(F)$.
\begin{ex}
	Denote by $\textbf{I}$ the unit groupoid, that is the 
	fundamental
	groupoid $\pi_1([0,1], \{0,1\})$ of the unit real interval
	with respect to its boundary. The universal groupoid 
	with respect to the unique map $\{0,1\}\to \{1\}$ 
	is $\pi_1(S^1,\{1\})$.
\end{ex}

\subsection{Presentations for groupoids}\label{subsec:presgroupoids}
	Let $\G$ be a 
groupoid. A family of arrows $S$ of $\G$ 
\emph{generates} $\G$ if the smallest subgroupoid containing $S$ is $\G$. Equivalently, if
$S \to \Forget(\G)$ is a map of graphs such that 
$F\colon\Free(S)\to \G$ is surjective,
we say that $S$ \emph{generates} $\G$ and we write 
$\G = \langle S\rangle$. 
Define $\ker F$ to be the subgroupoid of $
\Free(S)$
with objects $\Ob(\ker F)=\Ob(\Free(S))$ and 
morphisms those 
morphisms in $f\in \Mor(\Free(S))$ such that $F(f)$ is an 
identity in $\G$. In this special case, any set of morphisms 
$R$ generating $\ker(F)$ 
is a set of relations for $\G$ and we write 
$\G=\langle S|R\rangle$.

\subsection{Quotient groupoid}\label{subsec:quotient}
The kernel $\mathcal N = \ker F$ of $F$ is a \emph{normal}
subgroupoid of 
$\Free(S)$, that is $\Ob(\ker F)=\Ob(\Free(S))$
and for every $x,y\in \Ob(\Free(S))$ and 
every $a\in \Free(S)(x,y)$ we have 
$a.\mathcal N(y,y).\bar a\subset \mathcal N(x,x)$. There is a notion of quotient
by a normal subgroupoid, and from
\cite[11.3]{brown}, $\G$ is isomorphic to the quotient
$\Free(S)/\mathcal N$.
For any surjective groupoid morphism 
$F\colon \G\to \G'$, the first isomorphism theorem holds, that is
$\G'\simeq \G/\ker(F)$. 

We can even assume that $F$ is not surjective but only that $F(\Mor(\G))$
generates $\G'$.
 Following Remark \ref{rem:needuniversal}, 
we should enhance the quotient $\G'\simeq \G/\ker(F)$ to a 
quotient $\G'\simeq \mathcal U/\ker(F_u)$ where $\mathcal U$ is 
the universal groupoid associated with $F$ and
$F_u\colon \mathcal U\to \G'$ is the canonical functor. Indeed
in this case $F_u$ is surjective.
Then, a presentation of $\G'=\langle S\mid R\rangle$ can be given
as a generating family of arrows for $\mathcal U$ and $\ker(F_u)$.

We now come to the main part of this section which will enable
us to compute explicit presentations of fundamental groupoids.

\subsection{The Van Kampen theorem.}\label{subsec:vankampen}
Let $i\colon X_0\hookrightarrow X_1$ and $f\colon X_0\to X_2$
be continuous maps with $i$ injective. Let $A$
be representative in $X_0$ and $B$ in $X_2$
with $f(A)\subset B$. Let $X$ denote a push-out of 
$X_1\xleftarrow{i} X_0\xrightarrow{f}X_2$. By a result of 
Brown \cite[Chapter 9.1]{brown}, when $i$ is a cofibration and in particular when $X_0$ is a sub-CW-complex of a CW-complex 
$X_1$, the induced diagram 
\[\begin{tikzcd}
	{\pi_1(X_0, A)} & {\pi_1(X_2, B)} \\
	{\pi_1(X_1, A)} & {\pi_1(X, B)}
	\arrow[from=1-1, to=1-2]
	\arrow[from=1-1, to=2-1]
	\arrow[from=1-2, to=2-2]
	\arrow[from=2-1, to=2-2]
\end{tikzcd}\]
is a push-out diagram.

\subsection{Pushouts.}\label{subsec:pushoutcomp}
Let $\mathcal B\xleftarrow{i}\mathcal A \xrightarrow{j} \mathcal C$ be a push-out
diagram of groupoids with $\G$ a push-out. Denote by 
$f\colon \mathcal B\to\G$ and $g\colon \mathcal C\to \G$ the push-out maps.
Let~$\mathcal U$ be the universal groupoid
associated with
$f\sqcup g\colon \mathcal B\sqcup \mathcal C\to \G$, and let
$p\sqcup q\colon \mathcal B\sqcup \mathcal C\to \mathcal U$ be the induced
morphism of groupoids.
Let $F\colon \mathcal U\to \G$ be the canonical functor.
Then $F$  is surjective and has kernel generated by the normalizer 
of the union of $\{(p\circ i)(a).(q\circ j)(\bar a))\mid a\in \Mor(\mathcal A)\}$, $\ker(f)$ and
$\ker(g)$ in $\mathcal U$. Let $S_{\mathcal A}$, 
$S_{\mathcal B}$, $S_{\mathcal C}$, $S_{\ker(f)}$ and $S_{\ker(g)}$
denote generators for $\mathcal A$, $\mathcal B$, $\mathcal C$,
$\ker(f)$ and $\ker(g)$ respectively. 
From Section \ref{subsec:quotient}
a set of generators for $\G$ is given by $S_{\mathcal B}\cup S_{\mathcal C}$
and a set of relations is given 
$S_{\ker}\cup S_{\ker(g)}\cup \{(p\circ i)(s)(q\circ j(\bar s))\mid s\in S_{\mathcal A}\}$. 
\section{Groupoids from graph embeddings}
\label{sec:topology}
In this section we revisit work of Imbert \cite{Imbert1,Imbert2} on the 
computation of geometric presentations for Fuchsian groups in
the language of groupoids and graph embeddings.

Throughout this section we fix a Fuchsian group $\Gamma$ and call $Y$ the set of
points of $\h$ with non trivial stabilizer under the action of $\Gamma$, also
called \emph{elliptic} points. We
assume that $\Gamma$ is co-compact for simplicity but the results
hold with small changes for finitely generated Fuchsian groups. We denote by 
$S$ the quotient space $\Gamma\backslash \h$.

Following Theorem \ref{thm:devissage}, we wish to compute the
automorphism group of the covering 
$\h-Y\to \Gamma\backslash (\h-Y)$. To do so we need to compute 
suitable presentations of $\pi_1(\Gamma\backslash(\h-Y))$ and $\pi_1(\h-Y)$. 
\subsection{Combinatorial graph embeddings.}\label{subsec:cge}
As mentioned in the paragraph above Remark \ref{rem:orblooporder}, the space
$S$ is an orientable and compact topological real surface. From a
theorem of Radó Tibor \cite{combtriangulation}, every such surface 
admits a combinatorial triangulation. Equivalently, every 
such surface admits a graph embedding 
$G\hookrightarrow S$ such that $S-G$ is a disjoint union of 
polygons with boundaries  removed and with the property that each edge
of $G$ lies in the boundary of at most two of these polygons.
Such an embedding is said to be \emph{combinatorial}.

 We fix a combinatorial graph embedding
$\iota\colon G\hookrightarrow S$ and an orientation on $S$. The
surface $S$ should now be 
thought of as a gluing of polygons along their edges.
Denote by $(P_i)_{i=1,\ldots,f}$ the polygons, or \emph{faces},
forming $S-G$.
Each $P_i$ is endowed with the orientation coming from $S$.

Let $E=\sqcup E(\partial P_i)$,
$ D=\sqcup D(\partial P_i)$ and $\sigma_1$ be a fixed-point free
involution of $E$ representing the gluing.
We can partition $E=E^+\sqcup E^-$ from
a choice of representatives in $E/\sigma_1$ 
as negatively and positively oriented edges 
where an edge
$e\in E(\partial P_i)$ is positively oriented if its orientation
comes from that of $P_i$. The orientation of $S$ also yields a 
cyclic ordering of 
$E(\partial P_i)$ for each~$i$. Put $\sigma_{2,i}$ to be the 
corresponding permutation of $E(\partial P_i)$ and $\sigma_2$
the induced permutation on $E$.

We respectively call $\sigma_1$ and $\sigma_2$ the \emph{side pairing} and \emph{face permutation} of $\iota$.

\begin{rem} The connectedness of $S$ is equivalent to the
	transitivity of the action of 
	$\langle\sigma_1,\sigma_2\rangle$ on $E$.
\end{rem}

The data of the permutations $(\sigma_1,\sigma_2)$ is a 
\emph{face representation} of $\iota$.
We will need the equivalent \emph{vertex representation}
$(\sigma_0,\sigma_1)$ with $\sigma_0=(\sigma_1\sigma_2)^{-1}$ 
being the \emph{vertex permutation} of the embedding.

The set of edges $E$ is in bijection with the set of darts
$D(G)$ of $G$. This way, each $\partial P_i$
can be represented by a loop or word $w_i=e_1\ldots e_l$,
where each $e_i$ is viewed as a dart in $D(G)$ that is positively
oriented in $P_i$, and 
$(e_1\ldots e_l)$ is the cycle of $\sigma_2$ corresponding to 
$P_i$. We write $E(w_i)$ for the set of edges $\{e_1,\dots,e_l\}$.

The set of edges of the words $\{w_i:i\}$ then has the property
that each edge of $G$ appears exactly twice, once
positively and once negatively. We then say the embedding 
$\iota$, and also $S$ is represented by the words 
$(w_i)_{i=1,\ldots, f}$. An $m$-handle word is a word of the form
$\prod_{i=1}^m [a_i, b_i]w'$ where the notation $\prod_{i=1}^m [a_i,b_i]$
means $a_1b_1\bar a_1\bar b_1\ldots a_m b_m\bar a_m\bar b_m$ and $w'$ is any
word.
\begin{ex}\label{ex:torusword}
	The $g$-holed torus has a face representation
	$(\sigma_1,\sigma_2)$ with side pairing $\sigma_1=\prod_{i=1}^g (a_i~\bar a_i)$
	and $\sigma_2=(a_1~b_1~\bar a_1~\bar b_1~\ldots~a_g~b_g~\bar a_g~\bar b_g)$. 
	In particular it is
	represented by the word 
	$w_g=a_1b_1\bar a_1\bar b_1 \ldots a_gb_g\bar a_g\bar b_g$.
\end{ex}

To understand the vertex representation, assume
$S$ has a $C^1$-structure and fix a vertex $v$ of $G$. 
Each edge of $G$ corresponds to two paths in $\pi_1(G, V(G))$.
Call a \emph{dart} of $G$ any path $d$ associated with an edge
and say that $d$ is \emph{incident to $v$} if $d(1)=v$.
Project a small
neighbourhood of $v$ in $S$ on the tangent plane of $S$ at $v$. This gives a
cyclic ordering of the darts incident to $v$.
Each cyclic ordering then corresponds to a cycle of $\sigma_0$.
The set of edges $E$ in face representations is now replaced
by the set of darts, that we call again $E$, and
$\sigma_1$ maps any dart
$d$ to $\bar d$, linking the vertices associated with $d$ and
$\bar d$.

\begin{rem}[Euler characteristic]\label{rem:eulerchar}
	If $\iota$ has $f$ faces, $v$ vertices, $e$ edge
	and $S$ has genus $g$ then the equality 
	$2-2g=v-e+f$ holds.
\end{rem}

\subsection{The dual graph embedding.}\label{subsec:dualgraph}
From the face representation $(\sigma_1,\sigma_2)$ we can obtain 
the face representation of another combinatorial graph embedding 
$\iota^*\colon G^*\hookrightarrow S$, called the \emph{dual} 
graph embedding of
$\iota$, with face representation
$(\sigma_1, \sigma_1\circ\sigma_0\circ\sigma_1)$. A vertex representation
of the dual graph embedding is given by $(\sigma_2,\sigma_1)$, so that it
lets us interprets faces (resp. vertices) of $\iota$
as vertices (resp. faces) of $\iota^*$.

The associated polygons and gluings are built as follows. Let 
$(P_i)_i$ be a set of polygons corresponding to 
$(\sigma_1,\sigma_2)$ and $E$ be its set of edges.
For each edge $e$, common
to $P$ and $P'$ through $\sigma_1$, link the centers of $P$ and
$P'$ by an edge
going through $e$. Denote the new edge by $e^*$.
Let~$G^*$ be the graph with set of edges 
$E^*=\{e^*|e\in E\}$ and set of vertices the centers of the $P_i$.
We get
a graph embeding $\iota^*\colon G^*\hookrightarrow S$
with vertex representation
$(\sigma_2,\sigma_1)$ if we naturally identify $E$ and $E^*$.
See Figure \ref{fig:dualgraph} which pictures dualizing an
embedding with a single face.

\begin{figure}
	\centering
	\includegraphics[width=1\textwidth]{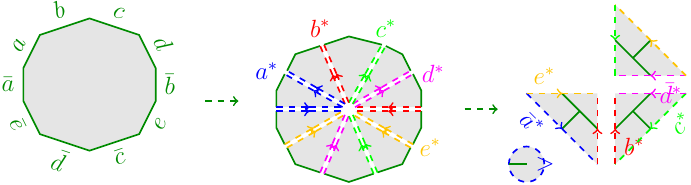}
	\caption{Dualizing $G\hookrightarrow S$ into $G^*\hookrightarrow S$.}
	\label{fig:dualgraph}
\end{figure}

\begin{rem}
	When $S=\Gamma\backslash \h$ and $\cup_i P_i$ form a fundamental domain for
	$\Gamma$, the set of polygons obtained by dualizing does not form a fundamental
	domain as elliptic points now lie in the interior of some polygons.
\end{rem}

\subsection{Our setting.}\label{subsec:setting}
	We are given a fundamental polygon $P$ with edges $E$ for a co-compact Fuchsian
	group $\Gamma$ with elliptic points $Y$ together with a side pairing 
	$(\sigma_1,\varphi)$. The polygon $P$ yields a combinatorial graph 
	embedding 
	$\iota\colon \partial P/\sigma_1=G\hookrightarrow S=\Gamma\backslash \h$ 
	with face representation $(\sigma_1,\sigma_2)$ on $E$. Let  
	$p\colon \h \to S$ be the quotient map.

	Recalling Theorem \ref{thm:devissage}, we need to puncture $S$ at points
	of $\Gamma\backslash Y$. But $Y$ meets $P$ at its vertices so that
	$\iota$ does not come from a map $G\hookrightarrow S-\Gamma\backslash Y$. Let 
	$\iota^*\colon G^*\hookrightarrow S$ be the dual graph embedding of
	$\iota$. Then $\Gamma\backslash Y$ does not meet $G^*$ so
	that $\iota^*$ comes from an embedding 
	$\iota^*\colon G^*\hookrightarrow S-\Gamma\backslash Y$.

	Remark that $\iota^*$ has a single vertex since
	$\iota$ has a single face. We have a groupoid
	surjection $\pi_1(G^*,V(G^*))\simeq \pi_1(S-V(G), V(G^*))\to \pi_1(S-\Gamma\backslash Y, V(G^*))$
	induced by the homotopy equivalence $G^*\hookrightarrow S-V(G)$ and 
	the inclusion $S-V(G)\subset S-\Gamma\backslash Y$.
	Following Theorem \ref{thm:devissage} and Remark
	\ref{rem:globalmonodromy}, we obtain
	a groupoid surjection 
	$\phi\colon \pi_1(G^*,V(G^*))\to \pi_1(S-\Gamma\backslash Y, V(G^*))\to \Gamma
	$ with kernel the image of the morphism 
	$\pi_1(\h-p^{-1}(V(G)), v_0)\to \pi_1(S-V(G), V(G^*))$
	induced by $p$ for some $v_0$ with $p(v_0)\in V(G^*)$.

	The group $\mathcal K = \pi_1(\h-p^{-1}(V(G)), v_0)$
	is obtained by first normalizing the groupoid $\G$ generated
	by the graph $\cup_ip^{-1}(\partial P_i)$ in $\pi_1(\h-p^{-1}(V(G)), p^{-1}(V(G^*)))$ and then letting $\mathcal K=\G(v_0)$.
	Explicitly, as in Section \ref{subsec:cge} the
	boundary
	$\partial P_i$ corresponds to a simple loop $\eta_i$ around
	some vertex
	$v$ of $P$ that maps through $p$ to a loop around 
	$p(v)$ a vertex of $G$. If 
	$c=(e_1 \ldots e_l)$ is the cycle of $\sigma_2$ corresponding
	to $p(v)$, then 
	$\delta_i=e_1\ldots e_l$ is a simple loop around $p(v)$. From 
	Remark \ref{rem:orblooporder} we then have 
	$p\circ\eta_i=\delta_i^m$ for some integer $m$ being the order
	of $\phi(\delta_i)$ in $\Gamma$.
	Then $\mathcal K$ is generated by the $\alpha\eta_i\bar \alpha$
	where $\alpha$ is a path
	from $v_0$ to $\delta_i(0)$. For each $i$ we denote 
	by $\gamma_i$ any of the loops $p_*(\alpha)\delta_ip_*(\bar\alpha)$.
	
	The map $\phi$ is fully determined by its image on $E^*$ 
	and we have $\phi(e^*)=\varphi(\bar e)$ for every $e^*\in E^*$. Indeed,
	let $g^*=\phi(e^*)$ and $g=\varphi(e)$. Also denote by $c$ the 
	center of $P$ and by $\theta^*$ the unique lift
	of $e^*$ with $\theta^*(1)=c$. Then as covering automorphisms
	of $p$ at $c$, both $g$ and $g^*$ are fully determined
	by their image at $c$. But $\theta^*.c$ is the
	center $g^{-1}.c$ of $g^{-1}.P$ by construction so
	that $g^*=g^{-1}$ and the result.

	We now show how to work and compute good presentations
	for the groupoid $\pi_1(S, V(G^*))$ before showing how to lift 
	them to the desired presentations for $\pi(G^*, V(G^*))$.

\subsection{Fundamental groupoids of surfaces}\label{subsec:fgS}
We keep the notations of the last section. We will compute
presentations of $\pi_1(S,V(G^*))$ coming from the map
$\iota^*$ but the procedure works for any combinatorial
embedding. A CW-complex structure on $S$ is given by 
$S^{(1)}=G^*$ and a $2$-cell for each $P_i$ with gluing maps 
induced by $\sigma_1$. Applying the Van Kampen theorem 
\ref{subsec:vankampen} to the push-out
\[\begin{tikzcd}
	{\bigsqcup_{i=1}^f\partial P_i} & {G} \\
	{\bigsqcup_{i=1}^f P_i} & S
	\arrow[from=1-1, to=1-2]
	\arrow[from=1-1, to=2-1]
	\arrow[from=1-2, to=2-2]
	\arrow[from=2-1, to=2-2]
\end{tikzcd}\]
we obtain the push-out
\[\begin{tikzcd}
	{\bigsqcup_{i=1}^f \pi_1(\partial P_i, V(\partial P_i))} & {\mathcal U_{\pi}/\mathcal N\langle e^*\sigma_1(e^*);~e^*\in E^*\rangle} \\
	{\bigsqcup_{i=1}^f (\pi_1(\partial P_i, V(\partial P_i))/\mathcal N\langle \delta_{i}\rangle)} & {\pi_1(S,V(G^*))}
	\arrow[from=1-1, to=1-2]
	\arrow[hook, from=1-1, to=2-1]
	\arrow[from=1-2, to=2-2]
	\arrow[from=2-1, to=2-2]
\end{tikzcd}\]
where $\mathcal U_{\pi}$ is the universal groupoid associated
with the projection $\pi\colon \bigsqcup_{i=1}^f \partial P_i\to\left( \bigsqcup_{i=1}^f \partial P_i\right)/\sigma_1$.

We obtain a presentation for $\pi_1(S,V(G^*))$ of the form
$\langle E^* \mid e\sigma_1(e),\delta_{i};~e^*\in E^*;~i\rangle$.
When $f=1$, there is a single face $P$ and 
writing $(E^*)^+=\{e_1<\ldots <e_n<e_1\}$ we get
$\pi_1(S,V(G^*))=\langle e_1,\ldots, e_n \mid e_1\ldots e_n\rangle$.

In the next section, we show how to reduce to this 
case.


\subsection{One face reduction.}\label{subsec:onefacered}
Let $G_{\textrm{faces}}$ be the graph with vertices $\{P_i;i\}$
and with edges the pairs
$(e^*,\sigma_1(e^*))$ such that $e^*$ and $\sigma_1(e^*)$ lie in
different polygons. Let $T$ be a spanning tree in 
$G_{\textrm{faces}}$. Define 
$P_T=\cup_{T} P_i$ to be the quotient of $\sqcup_{i=1}^f P_i$
where for every edge $(e,\sigma_1(e))$ of $T$ we identify $e$ and $\sigma_1(e)$.
See Figure \ref{fig:onefacered} for an explicit example.

The map $\partial P_T/\sigma_1\hookrightarrow S$ is again a 
combinatorial graph embedding. If we write $E(\partial P_T)^+=\{e_1<\ldots<e_{n'}<e_1\}$ and let $\delta=e_1\ldots e_{n'}$,
we have proved that 
\begin{align*}
\pi_1(S,V(G^*))&=\langle E^*| e^*\sigma_1(e^*),~ \delta_{i};~e^*\in E^*,~i=1,\ldots,f\rangle\\
			 &=\langle e_1,\ldots, e_{n'}| \delta \rangle
\end{align*}
In the following we let $\gamma=\alpha\delta\bar \alpha$ for
some path $\alpha$ from $v_0\in V(G^*)$ to $\delta(0)$.

\begin{rem}
If we denote by $n_i$ the number of edges of $P_i$ for each $i$
then $P_T$ has $n'=\sum_i n_i - 2(f-1)$ edges. 
When $G$ has a single
vertex, $P_T$ has a single vertex and computing the
euler characteristic (Remark \ref{rem:eulerchar}) of $\iota_T$ gives $n'=4g$ where $g$ is the genus of $S$. 
\end{rem}

\begin{figure}
	\centering
	\includegraphics[width=1\textwidth]{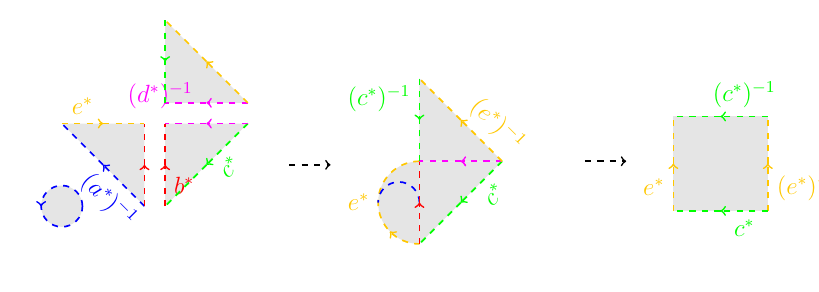}
	\caption{One face reduction into $cec^{-1}e^{-1}$.}
	\label{fig:onefacered}
\end{figure}
\subsection{Fundamental groupoids of graph embeddings.}\label{subsec:fgge}
	The groupoid surjection $\pi_1(G^*, V(G^*))\to \pi_1(S, V(G^*))$ induced by $\iota^*$ has kernel 
$\mathcal K=\mathcal N\langle \gamma_i:i=1,\ldots,f\rangle$ from
the last section.  
Replacing $\iota^*$ by 
$\iota_T\colon G_T=\Gamma\backslash \partial P_T\hookrightarrow S$ 
in the last section we can show that the morphism
$\pi_1(G_T, V(G_T))\to \pi_1(S,V(G^*))$ has kernel 
$\mathcal N\langle \gamma\rangle$. Let $F_T$ be the image of $\sqcup \partial P_i$
in $P_T$. Then, $\iota_T$ factors
through $\iota^*$
in such a way that it lifts to an embedding 
$\partial P_T\hookrightarrow F_T$
with image $F_T-T$ if $T$ is naturally seen as a subgraph of $F_T$.
In particular $\gamma$ lies in $\mathcal K$ so that it is of the
form $\gamma=\prod g_j \delta_j^{\pm 1} \bar g_j$ for some paths
$g_j$ in $\pi_1(G^*,V(G^*))$ from $\gamma(0)$ to $\delta_j(0)$. 

In Section \ref{sec:algos} we will show that in fact 
we can obtain the following.
\begin{thm}[Fundamental groupoids of graph embeddings]\label{thm:fgge}
	Any combinatorial graph embedding $\iota\colon G\hookrightarrow S$
	represented by words $(w_i)_i$ gives a presentation
	of the fundamental groupoid of $G$ of the form
	\[\pi_1(G ,V(G))=\langle \sqcup_i E(w_i)\mid w_i\delta_i, i\rangle\]
	with $\delta_i=\overline{w_i}$ and $\ker(\pi_1(G,V(G))\to \pi_1(S, V(G))=\mathcal N\langle \delta_i;i\rangle$. 
	Further, let $G_{\textrm{faces}}$ be the graph with vertices those faces
	of $\iota$ and vertices those pairs of edges $(e,\sigma_1(e))$ such that $e$
	and $\sigma_1(e)$ lie in different faces. Then any spanning tree $T$
	in $G_{\textrm{faces}}$ yields another presentation
	\[\pi_1(G,V(G))=\langle E(w), g_1\delta_{h(1)}\bar g_1,\ldots, g_f\delta_{h(f)}\bar g_f\mid w. \prod_{j=1}^f g_j \delta_{h(j)} \bar g_j=1\rangle\]
	for a word $w$ representing $S$ and we can compute explicitly the ordering $h$ 
	of the product, the paths $g_j$ and the word $w$ as follows.

	For any $i=1,\ldots, f$, let $P_i$ be a polygon corresponding
	to $w_i$. Let $P_T$ be the quotient of $\sqcup_i P_i$ 
	obtained by gluing $e$ and $\sigma_1(e)$ for any edge
	$(e,\sigma_1(e))$ in $T$ and $F_T$ be the image of $\sqcup_i \partial P_i$ in
	$P_T$. If $v=w(1)$, then:
	\begin{itemize}
		\item $h\colon \{1,\ldots, f\}\to \{1,\ldots, f\}$
	is the depth-first search ordering in $T$ from the smallest face containing
	$v$ using the cyclic ordering on vertices of $T$ induced by $\sigma_2$.
		\item For each $j=1,\ldots, f$, the path
			$g_j$  is the shortest path in $F_T$ from $v$ to $\delta_{h(j)}(0)$ respecting 
	orientation.
		\item The word $w$ represents the graph embedding $\Gamma\backslash\partial P_T\hookrightarrow S$.
	\end{itemize}
	When $w$ is an
	$m$-handle word for $S$, we call such presentation an
	$m$-handle presentation of $\pi_1(G,V(G))$. The special
	cases $m=0$ and $m=g$ for $g$ the genus of $S$ are
	respectively called one word and geometric presentations 
	of $\pi_1(G,V(G))$.
\end{thm}
From Theorem \ref{thm:devissage} and Remark 
\ref{rem:orblooporder} the next well-known theorem follows.
\begin{cor}\label{cor:presoneword}
	Let $\Gamma$ be a co-compact Fuchsian group with signature
	$(g;\nu_1,\ldots, \nu_f)$. Then there exists a word 
	$w=e_1\ldots e_n$
	representing $\Gamma\backslash \h$ and a presentation
	\[\Gamma\simeq \langle e_1,\ldots, e_n, \gamma_1,\ldots,\gamma_f\mid w.\prod _{i=1}^f\gamma_i, \gamma_j^{\nu_j};j\rangle\]
\end{cor}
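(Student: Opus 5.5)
We derive Corollary~\ref{cor:presoneword} from Theorem~\ref{thm:devissage} applied to $\h-Y$, after getting a presentation of $\pi_1(S-\Gamma\backslash Y)$ from Theorem~\ref{thm:fgge}.

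\textbf{Combinatorial data.} By Section~\ref{subsec:setting}, a fundamental polygon $P$ yields a combinatorial embedding $\iota\colon G\hookrightarrow S$ with a single face. Its dual $\iota^*\colon G^*\hookrightarrow S-\Gamma\backslash Y$ has a single vertex $v_0$. The faces of $\iota^*$ correspond to the vertices of $G$, i.e.\ to the cycles of $\sigma_0$.

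By Section~\ref{subsec:fdom} and Theorem~\ref{thm:sidepairingpres}, every elliptic orbit meets $P$ in a vertex. We refine $P$ if necessary, or equivalently add subdivided vertices, so that each vertex class of $G$ either is a point of $\Gamma\backslash Y$ or is a regular point; the $f$ elliptic classes then have stabilizer orders $\nu_1,\dots,\nu_f$. Regular vertex classes are harmless: the corresponding loop $\delta_i$ is trivial in $\Gamma$, i.e.\ $\nu=1$, and we discard it at the end by Tietze moves.

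\textbf{Presentation of $\pi_1(G^*)$.} We apply the second presentation of Theorem~\ref{thm:fgge} to $\iota^*$ with a spanning tree $T$ of $G_{\textrm{faces}}$. Since $G^*$ has one vertex, the groupoid is the free group $\pi_1(G^*,v_0)$, and it is presented as
\[
\langle E(w), \gamma_1,\dots,\gamma_F \mid w\cdot\textstyle\prod_j \gamma_j\rangle,
\]
with $\gamma_j=g_j\delta_{h(j)}\bar g_j$ and $w=e_1\cdots e_n$ a word representing $S$. Each $\delta_i$ is a simple loop around a vertex $p(v)$ of $G$, i.e.\ around a puncture of $S-V(G)$. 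Because $G^*\hookrightarrow S-V(G)$ is a homotopy equivalence, this is a presentation of $\pi_1(S-V(G),v_0)$.

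\textbf{Passing to $\Gamma$.} Theorem~\ref{thm:devissage}, in the form stated in Section~\ref{subsec:setting}, gives the surjection $\phi\colon\pi_1(S-V(G),v_0)\to\Gamma$. Its kernel is $p_*\pi_1(\h-p^{-1}(V(G)),v_0)$, which is normally generated by the images $p\circ\eta_i$ of simple loops around the punctures upstairs. By Remark~\ref{rem:orblooporder}, $p\circ\eta_i=\delta_i^{\nu_i}$ up to conjugation, where $\nu_i$ is the order of $\phi(\delta_i)$, namely the order of the stabilizer of the vertex. Hence the kernel is the normal closure of $\{\gamma_j^{\nu_{h(j)}}\}$, and adding these relations yields $\Gamma$. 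Deleting the generators $\gamma_j$ with $\nu=1$ and renumbering leaves exactly the $f$ conjugacy classes of maximal finite subgroups, which gives the claimed presentation.

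\textbf{Main obstacle.} The key step is identifying $\ker\phi$ as the normal closure of the $\gamma_j^{\nu_j}$. This needs Theorem~\ref{thm:devissage} together with the fact that $\pi_1(\h-p^{-1}(V(G)))$ is normally generated by small loops around the removed discrete set, which holds because $\h$ is simply connected, via Van Kampen. It also needs Remark~\ref{rem:orblooporder} applied locally at each vertex to match the exponent with the stabilizer order. The conjugating paths are irrelevant since we take normal closures.
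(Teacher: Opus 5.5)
Your proposal is correct and follows the paper's own route. You apply the second presentation of Theorem~\ref{thm:fgge} to the one-vertex dual embedding $\iota^*$ of Section~\ref{subsec:setting}, then use Theorem~\ref{thm:devissage} for the free action on $\h-p^{-1}(V(G))$, with Remark~\ref{rem:orblooporder} identifying the kernel as the normal closure of the $\gamma_j^{\nu_j}$. The refinement of $P$ you mention is unnecessary, since each vertex of $G$ is already either elliptic or regular. Your Tietze removal of the $\gamma_j$ with $\nu=1$ usefully makes explicit a step the paper leaves implicit: Theorem~\ref{thm:fgge} produces one $\gamma_j$ per vertex of $G$, not one per elliptic class.
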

We will say that the generators $(e_i)_{i=1,\ldots, n}$ are
topological while the $(\gamma_i)_{i=1,\ldots, f}$ are
elliptic.

The rest of this section is dedicated to building $m$-handle
words representing $S$ from any word representing $S$. We 
anticipate the last result in the next corollary.
\begin{cor}\label{cor:pres}
	In the setting of Corollary \ref{cor:presoneword}, $w$ can 
	be replaced by an $m$-handle word representing $\Gamma\backslash \h$ for any $m=0,\ldots, g$.
\end{cor}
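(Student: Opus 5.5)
Starting from the one-word presentation of Corollary \ref{cor:presoneword}, I will act only on the word $w$ by combinatorial moves on single-face embeddings. Each move is a cut-and-paste of the polygon $P_T$: cut along a diagonal, then reglue the two pieces along a paired edge. Such a move changes the word $w$ but keeps a single face representing the same surface $S=\Gamma\backslash\h$. The key point is that the relation $w\cdot\prod_i\gamma_i$ survives the move. Each move expresses the new edge labels as words in the old ones, so it induces a Tietze transformation on the generators $e_j$. Under this transformation the old boundary word is carried to a conjugate of the new boundary word, with the basepoint being where the elliptic product is attached. After re-choosing the $\gamma_i$ as conjugates by a fixed path, the relations $w\cdot\prod\gamma_i$ and $\gamma_j^{\nu_j}$ keep their shape. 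Alternatively, I can phrase this through Theorem \ref{thm:fgge}: a move just replaces the spanning tree and the gluing, and the theorem then produces the presentation directly from the new word. So it suffices to prove the purely combinatorial statement: every word $w$ representing a closed orientable surface of genus $g$ can be transformed into an $m$-handle word $\prod_{j=1}^m[a_j,b_j]w'$ for each $0\le m\le g$.

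For the combinatorial statement I will follow the classical classification-of-surfaces normal form, as Brahana and Imbert do. First, adjacent cancelling pairs $x\bar x$ are removed, except when the word is the sphere word. In a word with a single face, two letters $a$ and $\bar a$ are called \emph{linked} if some letter $b$ has exactly one of $b,\bar b$ lying strictly between $a$ and $\bar a$. Since $S$ is orientable and the word has no cancelling pair, a linked pair always exists whenever $g\ge1$; otherwise the vertex count in Remark \ref{rem:eulerchar} would force genus $0$. Given a linked pair, two cut-and-paste moves bring the word into the form $[a',b']u$, where $u$ again represents a genus $g-1$ surface after the handle is cut off. Finally I induct: on the remainder word $u$, repeating the procedure $m$ times extracts $m$ commutators, and I stop with $w'=u$ once $m$ handles have been extracted. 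Since $u$ is what remains of the original word, every remaining letter appears exactly once with each sign $\pm$.

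The main obstacle is the handle extraction step. There I must check that the two cut-and-paste moves do not disturb the commutators already extracted. I must also make sure that the vertex at which $w$ starts, namely $w(1)$, is still the base vertex to which the elliptic product $\prod\gamma_i$ is attached. For this I will perform every cut only inside the suffix that follows $\prod_{j<k}[a_j,b_j]$, and I will choose the cutting diagonals so that they start at a vertex of that suffix. Working this way, the cyclic rotations needed to bring the word to the required form cost only a conjugation of the whole relation. This conjugation is absorbed by relabelling the $\gamma_i$, so the presentation keeps the form stated in Corollary \ref{cor:pres}.
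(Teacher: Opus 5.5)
Your route is the paper's. Cut-and-paste moves are changes of generators that leave the boundary word unchanged as a path (Section~\ref{subsec:cutandpaste}), a linked pair produces a handle (Section~\ref{subsec:onehandle}), and one inducts while keeping the extracted commutators in front (Section~\ref{subsec:mhandles}). Your Tietze argument, the existence of linked pairs via the vertex count, and the genus count for the remainder are all fine.

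\textbf{The gap.} It sits exactly at the step you flag as the main obstacle, and your remedy does not suffice. Write the current word as $\zeta u$ with $\zeta=\prod_{j<k}[a_j,b_j]$. Pick any linked pair in $u$, say $u=u_0\,a\alpha b\beta\bar a\gamma\bar b\,\delta'$. Cyclically this is $a\alpha b\beta\bar a\gamma\bar b\delta$ with $\delta=\delta'\zeta u_0$. The two classical moves turn it into $\gamma\beta[d,c]\alpha\delta'\zeta u_0$, that is, cyclically, $\zeta\,u_0\gamma\beta\,[d,c]\,\alpha\delta'$. The new commutator is separated from $\zeta$ by $u_0\gamma\beta$, which is nonempty in general.
\begin{itemize}
\item A rotation of the whole word is the only thing absorbed by conjugating the $\gamma_i$, and it never changes the cyclic order of these blocks.
\item Rotating only the suffix is not a conjugation of the relation.
\item Requiring the diagonals to have endpoints in the suffix does not help, since every such diagonal leaves $\zeta$ entirely on one side.
\end{itemize}
What matters is where $\zeta$ sits relative to the chosen pair.

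\textbf{The missing idea} is to choose the pair relative to $\zeta$. The paper does this with Imbert's step. Take $c$ to be the letter immediately after $\zeta$ and $d$ a letter linked with it. This is possible because the suffix is again a one-vertex word, so each of its letters is linked. Three cut-and-pastes then give $\zeta f\bar g\bar f g\gamma\beta\alpha\delta$, with the new handle right after $\zeta$.

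Your two-move extraction can also be made to work. Let $x$ be the letter right after $\zeta$, put $a=\bar x$, and orient a letter linked with $x$ so that its occurrence between $x$ and $\bar x$ is $\bar b$. Then $\beta=\beta'\zeta$, and the output $\gamma\beta'\zeta[d,c]\alpha\delta$ becomes $\zeta[d,c]\alpha\delta\gamma\beta'$ after a global rotation.

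Alternatively, the Tietze move $X[d,c]Y=[XdX^{-1},XcX^{-1}]XY$ pulls the handle forward; it is legitimate since all generators are loops at the single vertex.

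For the bare existence statement there is an even simpler fix. Choose each new pair outside the existing handle blocks; those blocks then survive every extraction, as noted in Section~\ref{subsec:onehandle}. Go all the way to $m=g$. Since $w'$ is arbitrary in the definition, $\prod_{i=1}^g[a_i,b_i]$ is an $m$-handle word for every $m\le g$. The paper needs Imbert's procedure for the $\tilde O(m\mu)$ bound, not for existence.

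Finally, drop your alternative via Theorem~\ref{thm:fgge}. A cut-and-paste introduces arcs such as $\alpha b\beta$ that are not edges of $G^*$, so it is not a change of spanning tree. The Tietze argument is the right justification.
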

\subsection{Cut and paste.}\label{subsec:cutandpaste}
Let $w$ be a word representing a surface $S$ and 
$C\hookrightarrow S$ the associated graph embedding. Let $a,b$ be
two edges in $w$ such that we can write $w=a\alpha b\beta \bar a\gamma \bar b\delta$.
By cutting along $a$ and pasting along $b$ we mean
representing $S$ by the new word 
$w'=ad\bar a\gamma\beta \bar d\alpha\delta$ where
$d=\alpha b \beta$ in the underlying groupoid of $S$. In 
practice, this operation can be viewed
 as cutting the polygon attached to
$w$ following an arc joining $a(1)$ and $\bar a(0)$ and 
gluing the $2$ obtained polygons along $(b,\bar b)$, 
see Figure \ref{fig:cutandpaste}, then we always have 
$w=w'$ as paths in the fundamental groupoid of $S$. Denote by 
$C'\hookrightarrow S$ 
the graph embedding attached to $w'$. This 
operation translates into a groupoid isomorphism 
$F_{a,b}\colon \pi_1(S,V(C'))\to \pi_1(S,V(C))$ defined by 
$F_{a,b}(c)=\bar \beta \bar \gamma a$, $F_{a,b}(d)=\alpha b\beta$.
and $F_{a,b}(e)=e$ for $e$ not in $\{c,d,\bar c, \bar d\}$.
\begin{figure}
	\centering
	\includegraphics[width=0.6\textwidth]{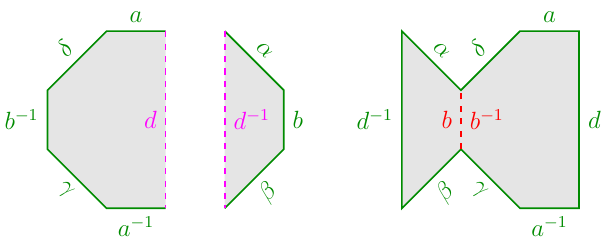}
	\caption{Cutting $a\alpha b\beta \bar a\gamma \bar b\delta$ along $a$ and pasting along $b$.}
	\label{fig:cutandpaste}
\end{figure}
\subsection{Extracting a handle.}\label{subsec:onehandle}
    Let $w$ be a word representing $S$ 
    such that there exists edges $a$ and $b$ with
    $w=a\alpha b\beta\bar a\gamma \bar b\delta$.
    First cutting along $c=\alpha b\beta$ and
    pasting along $b$ yields the word 
    $w'=ac\bar a\gamma\beta \bar c\alpha\delta$,
    then cutting along $\bar d=\bar a\gamma\beta$ and
    pasting along $\bar a$ yields the word
    $w''=\gamma\beta d c \bar d\bar c\alpha\delta$
	where the pattern $[d,c]=dc\bar d\bar c$ is called a handle.

When $w$ has a single vertex, such $a$ and $b$ always exist. 
We now suppose $w$ has a single vertex.
Noting that any handle already present in 
$\alpha\beta\gamma\delta$ remains a handle after extraction,
by successive extraction of handles we obtain a canonical 
word $w_g=\prod_{i=1}^g [a_i, b_i]$. But this process
is not suitable for algorithmic purposes. The following
algorithmic procedure was proposed by M. Imbert in \cite{Imbert1}.
\subsection{Extracting multiple handles.}\label{subsec:mhandles}
	We show how to build an $m$-handle word for $S$ for
	any $m=0,\ldots, g$ where $g$ is the genus of $S$.
	Assume recursively that we have built a word
	$w=\zeta c.\alpha.d.\beta \bar c\gamma \bar d.\beta$ with
	$\zeta=\prod_{i=1}^{m-1} [a_i,b_i]$ for $1\leq m$. 
	Cutting along $c$ and pasting along $d$ gives
	$\zeta c.e.\bar c\gamma\beta \bar e\alpha\delta$,
	cutting along $\bar c$ and pasting along $\bar e$ gives
	$c\alpha\delta\zeta f\gamma\beta \bar c\bar f$. Now
	cutting along $f$ and pasting along $\bar c$ gives
	$\zeta f\bar g\bar fg\gamma\beta\alpha\delta$ and letting
	$h=\bar g$ concludes the recursion. 

	Gathering the cut 
	and pastes we can explicitly
	write 
	$f=\overline{\gamma\beta\overline{(\alpha.d.\beta)}\alpha\delta\zeta}$
	and  $h=\overline{\gamma\beta \bar c}$.

	\begin{rem}
	The case $m=g$ in Section \ref{subsec:mhandles} yields the
	well-known geometric presentation.
	\end{rem}

\section{Algorithms}\label{sec:algos}
We keep the notations of Section \ref{subsec:setting}.
\subsection{Representation of the data.}\label{subsec:datarepr}
	We represent graph embeddings by their face representations, that is two
	permutations on a set of edges. From the side pairing of $\Gamma$ we can
	represent any element of $\Gamma$ by a word of edges in $E(G)$. 
	Instead of words of edges we use \emph{straight line programs} 
    (see Section~\ref{subsec:slp} and \cite[Section 3.1.3]{handbookgroup}) to
    represent elements and families of elements of  $\Gamma$.
	This will enable us to represent the set of conjugating paths $\{g_j\}_j$ in
	quasi-linear space instead of quadratic and to represent canonical
	loops in quasi-quadratic space instead of exponential space
	for the geometric presentation.
\subsection{Straight line programs.}\label{subsec:slp}
    Fix $1\leq n$ an integer.
    A \emph{straight line program} $s$ is a vector such that
	each entry is of the form $(\op,i,j)$, $(\op,i)$ or $(1)$
	for some symbol op representing a unary or binary operation
	and some integers $i$ and $j$. 

	Fix a straight line program $s$, a group $\Gamma$
	and a finite family $(g_i)_{i=1, \ldots, n}$
	of elements of $\Gamma$. The evaluation of $s$ in $\Gamma$ is
	done as follows: let $m$ be the length of $s$ and
	let $o$ be a vector of size $n+m$ such that $o_i=g_i$ for $i=1,\ldots,n$
	and $o_i=1_\Gamma$ otherwise. We will use the symbols mul, inv and
	id to represent multiplication, inversion and identity.

	We will incrementaly fill $o$ with
	elements of $\Gamma$. 
	Assume $o$ has been built up to $o_{n+k-1}$ for $k\geq 0$, let 
	$\textrm{instr}=s_k$, then:
	\begin{enumerate}
		\item If $\textrm{instr}=(\textrm{mul},i,j)$, put $o_k=o_io_j$.
		\item If $\textrm{instr}=(\textrm{inv},i)$, put $o_k=o_i^{-1}$.
		\item If $\textrm{instr}=(\textrm{id},i)$, put $o_k=o_i$.
		\item $\textrm{instr}=(1)$, put $o_k=1_\Gamma$.
	\end{enumerate}
The output of such algorithm can then either be the whole vector
$o$ or if a vector of pointers $\rho$ pointing to a set of
specific elements is given, the vector $(o_{\rho_j})_{j}$.
\begin{ex}
	Let $\Gamma=\Z$ and $(g_1,g_2)=(13,11)$. Let $+$ denote $\textrm{mul}$ and $-$ denote $\textrm{inv}$ for 
	$\Z$. The straight 
	line program
    \[
      s=[(+, 1,1), (+,3,3), (+, 4, 1), (-, 5), (+, 2,2), (+, 7, 2), (+, 8, 8),
      (+, 6, 9)]
    \]
    evaluates to $o=[26, 52, 65, -65,22, 33, 66, 1]$ in $\Z$ and corresponds
	to the identity $1=-5\cdot 13+6\cdot 11$.
\end{ex}
\subsection{Summary of the setup.}\label{subsec:output}
Recall that we are given a co-compact Fuchsian group $\Gamma$ as a fundamental
polygon $P$ and a side pairing $(\sigma_1,\varphi)$ with $S=\Gamma\backslash \h$.
From this fundamental polygon
we obtain a combinatorial graph embedding 
$\iota\colon G=\Gamma\backslash \partial P\hookrightarrow S$
and dual graph embedding $\iota^*\colon G^*\hookrightarrow S$ such that the induced
morphism of groups $\pi_1(G^*,V(G^*))\to \pi_1(S-V(G),V(G^*))$ is an isomorphism.

To compute an $m$-handle presentation (\ref{thm:specialpres})
for 
$\Gamma$, from Theorem \ref{thm:devissage}, we can view $\Gamma$ as the
automorphism group $\Gamma(p)$ of the covering 
$p\colon \h-Y\to S-\Gamma\backslash Y$ for 
which we have 
$\Gamma(p)\simeq \pi_1(S-\Gamma\backslash Y, p(v_0))/p_*\pi_1(\h-Y, v_0)$.
From Section \ref{subsec:setting} we further have $\pi_1(S-\Gamma\backslash Y)/p_*\pi_1(\h-Y, v_0)\simeq \pi_1(S-V(G), V(G^*))/p_*\pi_1(\h-p^{-1}V(G), V(G^*))$ 
where a generating family for $p_*\pi_1(\h-p^{-1}V(G), V(G^*))$ is
described again in Section \ref{subsec:setting}.
We then only need to compute a presentation for $\pi_1(G^*,V(G^*))$
as in Theorem \ref{thm:fgge}.

We use the notations of Section \ref{subsec:onefacered} and Section 
\ref{subsec:fgge}. The gluing map $\sigma_1$ induces a projection 
map $\pi\colon F_T\to G^*$ such that the induced morphism
$\pi_1(F_T, V(F_T))\to \pi_1(G^*, V(G^*))$ has kernel
 of the form 
$\langle  e\sigma_1(e);e\in E(F_T)-E(T)\rangle$ so
that we may only work in $\pi_1(F_T,V(F_T))$. We fix $v_0\in V(F_T)$.

\subsection{Computation of $\pi_1(F_T,v_0)$.}\label{subsec:algos}
Before computing $\pi_1(F_T,v_0)$, we set some notations and build
an ordering $h$ of the faces $(P_i)_{i=1,\ldots, f}$ as in Theorem 
\ref{thm:fgge}.

Assume that a spanning tree $T$ in 
$G_{\textrm{faces}}$ is given and denote by $E$ the set 
$\sqcup_i E(\partial P_i)$. Let $A$ be the set of
	vertices of the form $e(0)$ for $e$ a positively oriented
	dart of some polygon $\partial P_i$ and such that 
	$(e,\sigma_1(e))$ is an edge of $T$. We also let $v_0\in A$.
	For each vertex 
	$v\in A$ there is a unique corresponding dart $e$
	so that we may write $e\in A$. Viewing $T$ as a tree in 
	$\iota\colon G\hookrightarrow S$ linking the centers of the polygons 
	$(P_i)_{i=1,\ldots, f}$, the permutation 
	$\sigma_2^*$ yields a cyclic ordering on the darts incident
	to a vertex of $T$.

	We will order the set $A$ as well as the polygons $(P_i)_i$
	as in Theorem \ref{thm:fgge} using a depth first search in $T$.
 	Let $e_0$ be the dart corresponding to $v_0\in A$.
	\begin{algo}[Ordering $A$]\label{algo:dfsordering}
		$~$
	\begin{itemize}
		\item Input: $A$, $e_0$, $(\sigma_1,\sigma_2)$, an index
			$i$ initialized at $1$ for recursion
			and an ordering of the faces
			$(P_j)_j$.
		\item Output: an ordering of $A$.
	\end{itemize}
	\begin{enumerate}
		\item Initialize $e\gets e_0$.
		\item Mark the face of index $i$.
		\item If $e$ is in $A$ and the face $P_k$ containing
			$\sigma_1(e)$ is not marked, recurse with $e_0$
			replaced by $\sigma_1(e)$ and $i$ initialized at $k$.
			Then go to step (4).
		\item Increment $e$ to $(\sigma_2^*)^{-1}(e)$. If
			$e=e_0$, terminate. Otherwise go to step (3). 
	\end{enumerate}
	This yields an ordering $h\colon \{1,\ldots, f\}\to A$. From the recursion
	process, the set $\sigma_1(A)=\{\sigma_1(e)|e\in A\}$ is such that
	for each face $P_k$ there is a unique corresponding edge 
	$\sigma_1(e)\in\sigma_1(A)$. The ordering $h$ then also gives an ordering 
	of the faces $(P_{h(i)})_{i=1,\ldots, f}$. We also let 
	$\phi\colon \{1,\ldots, f\}\to\{1,\ldots, f\}\cup \{0\}$ associating
	with  a face $P_i$ with $i\geq 2$ the index of its immediate predecessor in the depth first search, or $0$ if $i=1$.
	\end{algo}

	 For $1<j$, fix $e$ to be the dart of index $j$ in $A$ and $P_j$ be the 
	corresponding polygon. There is a unique positively
	oriented path in $\sqcup_j \partial P_j$, which may be constant, from
	$\sigma_1(\phi(e))(1)$ to $e(0)$. In $F_T$ this is a 
	path from $\phi(e)(0)$ to $e(0)$ that we call $\alpha_j$. 
	As in Section \ref{subsec:setting}, let $\delta_j$ be the 
	only negatively oriented
	simple loop in $\partial P_j\subset F_T$ starting at $e(0)$.

Let $H$ be the subgraph of $\pi_1(F_T, A)$ with edges 
$\{\alpha_j;j\}\cup \{\delta_j;j\}$. Then $V(H)=A$. See 
Figure \ref{fig:gis} for an example.

The graph $H$ contains a unique 
spanning tree $T'$ which is isomorphic to $T$, such that
$H-T'$ is the subgraph of loops $\{\delta_j\}_j$. Finally
for $1<j$, let $g_j$ be the unique
smallest path from $v_0$ to $\delta_j(0)$ in $H$ and 
$\gamma_j=g_j\delta_j\bar g_j$.
For $j=1$, put $g_1$ to be the constant path and 
$\gamma_1=\delta_1$. We are ready to prove the main lemma from
which Theorem \ref{thm:fgge} follows.
\begin{lem}\label{lem:mainlemma}
	Let $\gamma$ be the only simple path in $\partial P_T=F_T-T$
	in negative orientation starting at $v_0$. As a word of
	edges, $\gamma$ represents $S=\Gamma\backslash \h$ and
	the equality $\gamma=\prod_j \gamma_j$ holds in 
	$\pi_1(F_T, v_0)$ and a fortiori in $\pi_1(G^*, V(G^*))$.
\end{lem}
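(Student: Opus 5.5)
We argue by induction on the number $f$ of faces, following the depth first search of Algorithm~\ref{algo:dfsordering}. First we check that $\gamma$ represents $S$. By construction $P_T$ is a single polygon, obtained from $\sqcup_i P_i$ by gluing along the edges of the spanning tree $T$. Its boundary $\partial P_T$ is therefore a single cycle, and as a subgraph of $F_T$ it is exactly $F_T-T$. Its edges are those of $\sqcup_i E(\partial P_i)$ not identified through $T$. Each such edge is glued by $\sigma_1$ to another one, so it appears exactly once positively and once negatively. Hence the cyclic word read along $\partial P_T$ starting at $v_0$ is a word representing $S$, as in Section~\ref{subsec:onefacered}. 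Reading it in negative orientation is a matter of convention that matches the choice of the $\delta_j$.

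For the identity $\gamma=\prod_j\gamma_j$, we work in $\pi_1(F_T,V(F_T))$, where the two darts of each tree edge of $T$ have been identified. The idea is that $\partial P_T$ is obtained from the boundaries $\partial P_j$ by a sequence of ``boundary sums'' along the edges of $T$, and each boundary sum is concatenation of loops after conjugation. Concretely, we induct on the subtrees $T_k$ spanned by the first $k$ faces in the DFS order $h$. Let $\gamma^{(k)}$ be the negatively oriented boundary loop of $P_{T_k}$ at $v_0$. We claim that $\gamma^{(k)}=\gamma_1\cdots\gamma_k$ in $\pi_1(F_{T_k},v_0)$.

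For $k=1$ this is the definition $\gamma_1=\delta_1$. Now pass from $k-1$ to $k$ by attaching the face $P_{h(k)}$ along the edge $(\sigma_1(e),e)$ of $T$, where $e$ is the dart of index $k$ in $A$. Let $x=e(0)$ on the boundary of $P_{T_{k-1}}$. Write $\gamma^{(k-1)}=u\,u'$ with $u$ the part from $v_0$ to $x$. The new boundary is obtained by deleting the glued dart and inserting the boundary of $P_{h(k)}$ minus that dart. As paths this gives $\gamma^{(k)}=u\,\delta_{h(k)}\,u'$, because the glued edge contributes $e\bar e$, which cancels. This gives $\gamma^{(k)}=u\delta_{h(k)}\bar u\,\gamma^{(k-1)}$ up to the ordering issue below.

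The DFS order is used to identify $u$ with $g_k$ and to move the new factor to the right end. Since faces are visited by a depth first search following the cyclic order $(\sigma_2^*)^{-1}$ at each vertex of $T$, the attachment point $x$ of the $k$-th face lies after all previously attached material when reading the boundary from $v_0$. So $u'$ consists only of portions of earlier faces that have already been passed. In $\pi_1$ the prefix $u$ is then homotopic, through the cancellations $e\bar e$ of tree edges, to the path in $H$ made of the $\alpha$'s and, for ancestors, the pieces of $\delta$'s. Because the loops of the faces attached later start after $x$, this path is the shortest path $g_k$ in $H$ from $v_0$ to $\delta_{h(k)}(0)$. With this identification the relation becomes $\gamma^{(k)}=\gamma^{(k-1)}\gamma_k$ once the closing segment is absorbed. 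Finally, the statement descends to $\pi_1(G^*,V(G^*))$ by applying the projection $\pi$.

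The main obstacle is this last bookkeeping: showing that the DFS ordering makes the inserted loop appear exactly as the last factor $g_k\delta_{h(k)}\bar g_k$ rather than in the middle. It requires checking that backtracking along tree edges cancels in $\pi_1(F_T)$. I would handle it by a direct inspection of the positions of darts in the cycle of $\partial P_{T_k}$ under $\sigma_2$, and by using that $T'\subset H$ is a tree isomorphic to $T$, so that shortest paths are unique.
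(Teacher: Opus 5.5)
There is a genuine gap, and it is exactly the step you flag as the main obstacle. From $\gamma^{(k-1)}=u\,u'$ and $\gamma^{(k)}=u\,\delta_{h(k)}\,u'$ you correctly get $\gamma^{(k)}=(u\,\delta_{h(k)}\,\bar u)\,\gamma^{(k-1)}$. You then identify the prefix $u$ with $g_k$, and that identification is wrong.

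By your own description of the depth first search, $x$ comes after all previously attached faces. So $u$ contains a complete excursion around every subtree already glued, and these excursions do not cancel. A detour $\rho$ that enters and leaves a face through the same tree edge $\epsilon$ equals $(\rho\bar\epsilon)\,\epsilon$, a face loop followed by $\epsilon$, not $\epsilon$ itself. Precisely, $u=\gamma^{(k-1)}\bar u'$, which differs from $g_k$ by the nontrivial loop $\gamma^{(k-1)}$.

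If $u$ were $g_k$, you would get $\gamma^{(k)}=\gamma_k\gamma^{(k-1)}$, hence $\gamma=\gamma_f\cdots\gamma_1$. For $k\geq 2$, $\gamma_k$ and $\gamma_1\cdots\gamma_{k-1}$ do not commute in the free group $\pi_1(F_{T_k},v_0)$. So no ``absorption of the closing segment'' can move the new factor to the right end.

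The missing idea is to conjugate by the suffix instead: $\gamma^{(k)}=\gamma^{(k-1)}\,(\bar u'\,\delta_{h(k)}\,u')$. You then need to show that $\bar u'$ is $g_k$, after moving the base point of the face loop across the tree edge (this is the paper's $\delta_f=\bar e\eta_f$). This is where the depth first search genuinely enters. The suffix $u'$ runs only along the remaining arcs of the boundaries of the ancestors of $P_{h(k)}$. Any child of an ancestor met on those arcs is visited after $P_{h(k)}$, so it is not yet glued. Read backwards, these arcs are exactly the $\alpha_{j_i}$ along the ancestor chain.

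This is the paper's key observation, stated for the last face. Each $\alpha_{j_i}$ on the chain from $P_1$ to $P_{h(f)}$ does not cross $T$, so $\bar\gamma=g_f\bar\eta_f\eta$ is elementary. Then $\gamma_f\bar\gamma=g_f\bar e\eta$ is the boundary loop of $P_{T_1}$, the polygon without the leaf $P_{h(f)}$, and one recurses on $f-1$ faces. With the conjugator corrected to $\bar u'$, your forward induction becomes the mirror image of that backward induction. Your argument that $\gamma$ represents $S$ is fine.
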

\begin{proof}
	The graph $H$ is a generating graph for $\pi_1(F_T,A)$ so 
 	that we only need to
	find the relation 
	$\gamma=\prod_{j=1}^f g_j\delta_{j}\bar g_j$ in $\pi_1(H, A)$. 
 	Let $\Pi=\pi_1(H, A)$. The group $\Pi(v_0)$ is freely generated
	by the set $\{\gamma_j\}$ as $H/T'\simeq \vee {i=1}^f S^1$ and $T'$
	is a spanning tree.

	We say that a word
	$u_1\ldots u_k$ of darts in a graph $G$ is elementary if there is no $j$ such 
	that $u_j=\bar u_{j+1}$ in $\pi_1(G,V(G))$.

	We will show that 
	$\gamma_f\bar \gamma=\bar \gamma_{f-1}\ldots \bar\gamma_{1}$.

	Let $(j_i)_{i=0,\ldots, l}=(\phi^i(f))_{i=0,\ldots, l}$ denote the path from 
	$P_1$ to $P_{h(f)}$ in $G_{\textrm{faces}}$. Then for any $i=0,\ldots,l$,
	the path $\alpha_{j_i}$ does not cross $T$ since by 
	construction each
	$j_i$ corresponds to some edge $e$ in a face $P$ which is seen last at
	this depth of the recursion in the depth first search of Algorithm
	\ref{algo:dfsordering}.

	It follows that $g_f=\alpha_{j_l}\ldots \alpha_{j_0}$ does not
	cross $T$ so that if we write $\delta_f=\bar e\eta_f$ for $e=h(f)$
	then $g_f\bar \eta_f$ is elementary and contained in $\partial P_T$. We can
	then write $\bar \gamma= g_f\bar \eta_f \eta$ and the product 
	is elementary. Then $\gamma^{(1)}=\gamma_f\bar \gamma=g_f\bar e\eta$ is a simple loop 
	in $F_T$ with image $F_T-\left((T-\{e\})\cup E(\eta_f)\right)$ and with $\gamma^{(1)}(0)=v_0$.

	We can then proceed by recursion by writing $A_1=A-\{h(f)\}$, $T_1=T-{e}$,
	$P_{T_1}$ the gluing of $(P_i)_{i=1,\ldots, f-1}$
	along $T_1$, $F_{T_1}$ the image of the graph $\cup_{i=1}^{f-1} \partial P_i$
	in $P_{T_1}$ and $H_1=H-\{\alpha_f, \delta_f\}$. Indeed since
	$h|_{\{1,\ldots,f-1\}}$ is the depth first
	search ordering of $A_1$ associated with $T_1$ starting at $v_0$, 
	the analogous statement in $H_1$ says that 
	$\gamma^{(1)}=\gamma_1\ldots\gamma_{f-1}$ which is the desired result. 
\end{proof}

\begin{figure}
	\centering
	\includegraphics[width=0.9\textwidth]{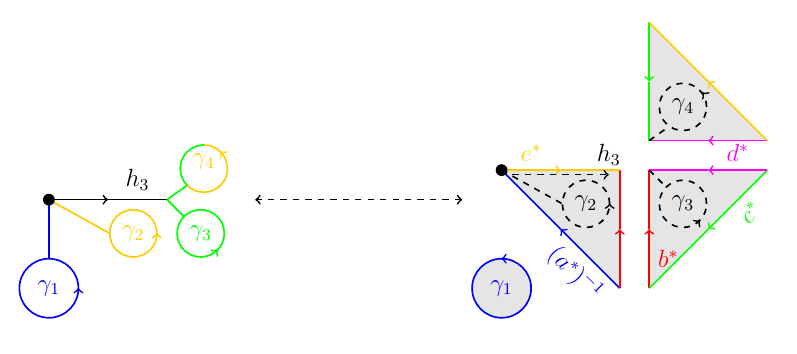}
	\caption{The graphs $H$ and $F_T$ with $\gamma=\gamma_1\gamma_2h_3\gamma_3\gamma_4\overline{h_3}$.}
	\label{fig:gis}
\end{figure}
It remains to show how to build $T$ and straight line 
programs for the paths $(g_j)_{j=1,\ldots, f}$ and the word
$\gamma$. Let $e_0$ be the dart corresponding to $v_0\in P_1$.
\begin{algo}[Building T]\label{algo:T}
The algorithm is almost identical to Algorithm 
\ref{algo:dfsordering} and can be performed at the same time.
\begin{itemize}
	\item Input: the dart $e_0$ and $(\sigma_1,\sigma_2^*)$.
	\item Output: a tree $T$ in $G_{\textrm{faces}}$.
\end{itemize}
	\begin{enumerate}
		\item Mark the face $\partial P_1$. Let $e=e_0$.
		\item If $\sigma_1(e)$ is not marked, add $(e,\sigma_1(e))$
			to $T$ and recurse with $e_0$
			replaced by $\sigma_1(e)$ then go to step 2.
		\item Increment $e$ to $(\sigma_2^*)^{-1}(e)$. If
			$e=e_0$, terminate. Otherwise go to step 1. 
	\end{enumerate}
\end{algo}

\noindent We write $E(\partial P_T)=\{e_0,\ldots, e_{n'}\}$ where the ordering
comes from the positive orientation of $P_T$. We also write
$E(T)=\{(e^2, \ldots, e^f)\}$ with the ordering of the 
depth first search and if $\varphi$ is the morphism
$\pi_1(G^*,V(G^*))\to \Gamma$ we write
$\varphi(D(G^*)^+)=\{\beta_1,\ldots, \beta_m\}$ which is a
generating family associated with the embedding 
$G\hookrightarrow S$.
\begin{algo}[Building a straight line program computing $(g_i)_{i=1,\ldots, f}$.]\label{algo:gis}
	$~$
	\begin{itemize}
		\item Input: $T,~F_T$ and $(\sigma_1,\sigma_2^*)$.
		\item Output: a straight line program $s$ 
			with pointers $\rho$ such that $s_{\rho_i-n}$ computes $g_i$.
	\end{itemize}
	\begin{enumerate}
		\item Let $s$ be a vector of size $2n$. Let $\rho$ be 
	a vector of pointers of size $f$. As $g_1=\emptyset$ let 
	$s_1=(1)$ and $\rho_1=1$.
	\item Let $k=2$.
	\item For $l=2$ to $f$:
		\begin{itemize}
			\item Let $e=\sigma_1(e^l)$
and $i=\rho(\phi(e^l))$, that is $i$ points to the path $g_i$
joining $v_0$ to the face of index $l-1$ in $o$.
			\item If $e(1)=e^{l+1}(0)$ put $s_k=(\id_o, n+i)$,
				$\rho(l)=k$, $k=k+1$.
			\item Otherwise, until $e=e^{l+1}$:
				\begin{itemize}
					\item If $\varphi(e)=\beta_j$,
				put $s_k=(\textrm{mul}_o, n+i, j)$,
				$\rho(l)=n+k$, $e=\sigma_2^*(e)$ and 
				$k=k+1$.
				\end{itemize}
			\item Put $\rho(l)=k-1$.
		\end{itemize}
		\item Return $(s, \rho)$.
	\end{enumerate}
\end{algo}
\noindent We now fix $E(F_T)=\{d_1,\ldots, d_{n'}\}$ an arbitrary ordering.
\begin{algo}[Building the word $\gamma$ from T]\label{algo:gamma}
	$~$
	\begin{itemize}
		\item Input: $T,~F_T$ and $(\sigma_1,\sigma_2^*)$.
		\item Output: the word $\gamma$ representing $\Gamma\backslash \partial P_T\hookrightarrow S$.
	\end{itemize}
If $S$ has genus $0$ return the empty word.
	\begin{enumerate}
		\item Let $e=e^1$.
		\item While $e$ is in $T$:
			\begin{itemize}
				\item Let $e=(\sigma_2^*)^{-1}\sigma_1(e)$.
			\end{itemize}
		\item Then $e=e_0$ and we put $\gamma=e$.
		\item For $i=1,\ldots, n$:
			\begin{itemize}
				\item Let $e=(\sigma_2^*)^{-1}(d_i)$.
				\item While $e$ is in $T$:
					\begin{itemize}
						\item Let $e=(\sigma_2^*)^{-1}(\sigma_1(e))$.
					\end{itemize}
				\item Let $\gamma=\gamma.e$.
			\end{itemize}
		\item Return $\gamma$.
	\end{enumerate}
\end{algo}
Once the one face reduction has been performed and we have 
obtained a word $w$ representing $S$, obtaining the various
special presentations is simply
a matter of rewriting the topological generators of the 
fundamental group of $\pi_1(G^*,V(G^*))$ as explained in 
Section \ref{subsec:cutandpaste}. The word $w$ is then 
rewritten as a word $w'$ in the new generators such that 
$w=w'$ in $\pi_1(S-V(G),V(G^*))$, and hence in 
$\pi_1(G^*,V(G^*))$.

In the setup of straight line programs, it is more convenient to have a single
straight line program $s$ on a set of generators for $\Gamma$ corresponding to
the side pairing associated with $P$ that computes the family 
$(e_1,\ldots, e_{n'}, \gamma_1,\ldots, \gamma_f)$ forming an $m$-handle presentation
of $\Gamma$. Recall that the evaluation of $s$ depends on the chosen group
so that we may say $s$ computes $(e_1,\ldots, e_{n'}, \gamma_1,\ldots, \gamma_f)$
from $E(F_T)$ to say the same thing as $s$ computes 
$(\varphi(e_1),\ldots, \varphi(e_{n'}), \varphi(\gamma_1),\ldots, \varphi(\gamma_f))$ from $\varphi(E(F_T))$.
\begin{algo}[Building the full straight line program]\label{algo:mainslp}
	$~$
	\begin{itemize}
		\item Input: $(s,\rho)$ as output by Algorithm \ref{algo:gis}, $\gamma$
			and $(e_1,\ldots, e_{n'})$ output by Algorithm \ref{algo:gamma}.
		\item Output: straight line program from $E(F_T)$
			to $(e_1,\ldots, e_{n'}, \gamma_1,\ldots, \gamma_f)$.
	\end{itemize}
	\begin{enumerate}
		\item Let $\rho'$ be pointers to $\{e_1,\ldots, e_{n'}\}\subset E(F_T)$
			using an ordering of $D(F_T)$ and $s'=((\textrm{id}, \rho'(i)))_{i=1,\ldots, n'}$. 
		\item Let $s'$ be the composition of $s'$ with $s$ to get a straight line 
			program from $D(F_T)$ to $(g_j)_j$ and update $\rho'$.
		\item Let $s'$ be the composition of $s'$ and a straight line
			program from $(\delta_j,g_j)_j$ to $(\gamma_j)$ to get a straight line
			program from $D(F_T)$ to $(\gamma_j)_j$ and update $\rho'$.
		\item Concatenate $\rho'$ with pointers to the
			instructions computing 
			$(e_1,\ldots, e_{n'})$ in $s'$ 
			to get an slp from $D(F_T)$ to 
			$(e_1,\ldots, e_{n'}, \gamma_1,\ldots, \gamma_f)$.
		\item  Return $(s',\rho')$.
	\end{enumerate}
\end{algo}

\begin{rem}\label{rem:invslp}
	For various purposes it may be useful to invert a given straight line
	program $s$. If $s$ computes a family $(\theta_j)_j$ from a family 
	$(\beta_j)_j$ of elements of $\Gamma$. By inverting $s$ we mean building a
	straight line program $s^{-1}$ computing $(\theta_j)_j$ from $(\beta_j)_j$.
	In general it is very hard but in our case it is straight forward. 
	in our case. Let $(s,\rho)$ be output by Algorithm \ref{algo:mainslp}
	computing $(e_1,\ldots, e_{n'},\gamma_1,\ldots, \gamma_f)$ from $E(F_T)$.
	As $\{e_1,\ldots, e_{n'}\}=E(F_T-T)$ we only need to show how to write
	edges of $T$ as seen in $F_T$. By definition
	$\gamma_f$ corresponds to a face $P$ which is a leaf of $T$. That is 
	a single edge $e$ of $P$ is in $T$. Further, as shown in the proof
	of Lemma \ref{lem:mainlemma}, $\bar g_f$ is made only of
	edges in $E(F_T-T)$. But then since
	$\delta_f= u_1\ldots u_k e$ corresponds to $\partial P$ with 
	$u_1,\ldots, u_k\in E(F_T-T)$ we can write 
	$e=\bar g_f\gamma_fg_f\bar u_k\ldots\bar u_1$ and proceed by recursion.
\end{rem}

\subsection{Complexity analysis. }\label{subsec:complexity1}
	Let $n$ be the number of edges of the fundamental polygon $P$. Building 
	$T$ has complexity that of the depth first search which
	is $O(n+f)=O(n)$.
	When faces have size $\leq m$, building the straight
	line program for the paths $(g_j)_{j=1,\ldots,f}$
	has space complexity $\tilde O(m.n)$ with the presented algorithm.
	But in our case, the output of the algorithm in Theorem
	\ref{thm:algofdom} yields a dual graph embedding with faces
	of size at most $3$. So that this step is quasi-linear. In
	general when $m$ is big, it may happen that
	there are $\tilde O(m)$ paths $\alpha_j$ in the same 
	face. But for $j<k$ with $\alpha_j$ contained in $\alpha_k$
	we can reuse the computation of $\alpha_j$ to obtain 
	$\alpha_k$ and get a complexity of $\tilde O(n)$.
	
	Building $\bar \gamma$ has complexity $\tilde O(n)$ as each
	edge of $T$ is crossed at most twice. This proves that
	the one-word presentation can be obtained in quasi-linear
	time.

	Given a one-word presentation, extracting $m$-handles
	has complexity $\tilde O(m.n)$. Indeed,
	using the notations of \ref{subsec:mhandles}, extracting a
	handle is just a matter of finding the tuple $(c,d)$,
	rewriting $w$ and writing $f$ and $h$ which is done in 
	$\tilde O(n)$ time and space. For the geometric presentation, this yields
	$\tilde O(n^2)$ time and space, using the straight line program
	representation for the canonical loops $(a_i,b_i)_{i=1,\ldots,g}$.
	\begin{rem}
		Using a direct word representation of $a_g$ would require
		exponential space as each $a_i$ contains $\prod_{j=1}^{i-1}[a_j,b_j]$.
	\end{rem}
\begin{lem}\label{lem:asymptotic}
	Let $\mu$ be the area of a fundamental polygon $P$ for $\Gamma$. 
	The co-area $\mu(\Gamma\backslash \h)=\mu$ of $\Gamma$ is linear
	in the number of edges of $P$.
\end{lem}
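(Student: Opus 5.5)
The plan is to prove the two inequalities $\mu \le \pi n$ and $n = O(\mu+1)$ separately, where $n$ is the number of edges of $P$. Both come from the Gauss--Bonnet formula for hyperbolic polygons, combined with the vertex-cycle structure given by $\sigma_0$ (Theorem~\ref{thm:sidepairingpres}). Since $P$ is a fundamental domain, $\mu(\Gamma\backslash\h)$ equals the hyperbolic area of $P$. If $\theta_1,\dots,\theta_n$ are the interior angles of $P$, Gauss--Bonnet gives
\[
\mu = (n-2)\pi - \sum_{i=1}^n \theta_i .
\]
Since every $\theta_i \ge 0$, this yields $\mu \le (n-2)\pi$ immediately.

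For the lower bound I would group the vertices of $P$ into the cycles $c_v$ of $\sigma_0$. The angles in the cycle $c_v$ sum to $2\pi/\nu_v$, where $\nu_v$ is the order of the stabiliser; accidental cycles have $\nu_v = 1$, and the elliptic cycles correspond to the $f$ conjugacy classes in the signature $(g;\nu_1,\dots,\nu_f)$. Write $V$ for the number of cycles and $V_{\mathrm{acc}} = V - f$ for the number of accidental ones. Then
\[
\mu = (n-2)\pi - 2\pi V_{\mathrm{acc}} - 2\pi\sum_{j=1}^f \frac{1}{\nu_j}.
\]
The key step is to bound $V_{\mathrm{acc}}$ in terms of $n$. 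Each vertex of $P$ is a genuine vertex of a convex polygon, so its interior angle is $<\pi$. Hence an accidental cycle, whose angles sum to $2\pi$, must contain at least $3$ vertices. The midpoints of edges fixed by order-$2$ elements have angle $\pi$, but they are elliptic, so they do not enter this count. At least $f$ of the $n$ vertices lie in elliptic cycles, so $V_{\mathrm{acc}} \le (n-f)/3$. Using $\nu_j \ge 2$ gives $\sum_j 1/\nu_j \le f/2$. Substituting,
\[
\mu \ge 2\pi\Bigl(\frac{n}{6} - 1 - \frac{f}{6}\Bigr).
\]

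It remains to bound $f$ by $O(\mu+1)$. For this I would use the Riemann--Hurwitz/Gauss--Bonnet formula for the orbifold, $\mu = 2\pi\bigl(2g-2+\sum_j(1-1/\nu_j)\bigr) \ge 2\pi(-2 + f/2)$, which gives $f \le \mu/\pi + 4$. Combining the two displays yields $n \le 6\mu/(2\pi) + 6 + f = O(\mu+1)$. Since $\mu$ is bounded below by a positive universal constant for co-compact Fuchsian groups, this is $O(\mu)$, so $n$ and $\mu$ are linearly related.

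The main obstacle is the convexity argument: checking that every vertex of $P$ in the sense of Section~\ref{subsec:sidepairings} really has angle $<\pi$, except the inserted midpoints of order-$2$ edges, which are elliptic. Without this, degenerate polygons with many flat accidental vertices would break linearity. I would handle it by appealing to the definition of a polygon, whose boundary consists of maximal geodesics, so distinct edges are not collinear at a vertex.
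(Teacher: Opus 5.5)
Your proof is correct, but it takes a different route from the paper's.

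\textbf{The paper's route.} The paper argues locally. It first shows that no two consecutive angles of $P$ can both equal $\pi$: an angle $\pi$ forces $\sigma_2(e)=\sigma_1(e)$, and two such angles in a row would give $\sigma_2^2(e)=e$. It then shows that at any vertex with angle $<\pi$, the full angle $2\pi$ is split among at least three angles of $P$, so at least a third of them are at most $2\pi/3$. Together these give at least $n/6$ angles bounded away from $\pi$, and polygon Gauss--Bonnet finishes the argument.

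\textbf{Your route.} You instead do global bookkeeping over the cycles of $\sigma_0$. You bound the number of accidental cycles by $(n-f)/3$. You control the troublesome straight-angle order-$2$ midpoints, which contribute nothing to the area, by bounding $f$ through the orbifold area formula $\mu = 2\pi\bigl(2g-2+\sum_j(1-1/\nu_j)\bigr)$. The paper handles these same vertices with its ``no two consecutive $\pi$-angles'' argument.

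\textbf{What each buys.} The paper's argument needs nothing beyond Gauss--Bonnet for $P$ and the local picture at a vertex. Yours imports the signature and the orbifold formula. Nothing essential is lost, though: that formula follows from your own identity $\mu=(n-2)\pi-2\pi V_{\mathrm{acc}}-2\pi\sum_j 1/\nu_j$ together with the Euler characteristic $2-2g=(V_{\mathrm{acc}}+f)-n/2+1$ of the one-face embedding. In exchange, your version:
\begin{itemize}
\item gives explicit constants ($n\le 4\mu/\pi+10$);
\item records the easy converse $\mu\le(n-2)\pi$;
\item states explicitly the appeal to a universal positive lower bound on $\mu$ to absorb the additive constant.
\end{itemize}
The paper's proof needs that last step too, since its estimate really reads $n\le 18(\mu+2\pi)/\pi$, but leaves it implicit.
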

\begin{proof}
	Let $P$ be a fundamental polygon for $\Gamma$. 
	Its area is given by the formula
	\[\mu(P)=(n-2)\pi-\sum_{i=1}^n\alpha_i\]
	where the $(\alpha_i)$ are the interior angles at vertices of $P$. We
	will prove that at least $\frac{n}{6}$ of the angles are smaller than
	$\frac{2\pi}{3}$. As all angles are at most $\pi$ by convexity, this shows that
    $n = O(\mu(P))$.

	We first prove that no two consecutive angles can both be equal to $\pi$. 
	 Let $\alpha$
	be an interior angle of $P$ for consecutive edges $e$ and $\sigma_2(e)$, and
	$\beta$ the interior angle of $\sigma_2(e)$ and $\sigma_2(\sigma_2(e))$. 
	We have $\beta,\alpha\leq \pi$ by convexity.
	We also have $\alpha=\pi$ if and only if
	$\sigma_2(e)=g.e$ for some $g\in\Gamma$ of order $2$ by definition of edges of $P$. This can be rewritten
	as $\sigma_2(e)=\sigma_1(e)$. Let us suppose that $\alpha=\beta=\pi$. Then
	$\sigma_2(\sigma_2(e))=\sigma_1(\sigma_2(e))$ and $\sigma_2(e)=\sigma_1(e)$
	yielding $\sigma_2(\sigma_2(e))=e$
	which says that $P$ has at most $2$ edges but 
	as $P$ has non-zero area, it has at least $3$ edges so this is a
	contradiction. We now prove the result.

	Fix a vertex $v$ of $P$ with angle $\alpha<\pi$ and let $C=\partial P$. If
	$D_v$ is the set of darts of $\Gamma.C$ incident to $v$ and
	$A$ is the set of
	angles between adjacent darts of $D_v$ then $A$ is a subset
	of $\{\alpha_i\}_i$
	because $\Gamma$ acts conformally on $\h$. From the identity
	$\sum_{\beta\in A}\beta=2\pi$ and the fact that $\alpha$ is 
	in $A$ we must have
	$|A|\geq 3$ so
	that at least a third of the elements of $A$ are less than $2\pi/3$. This
    concludes the proof of the lemma.
\end{proof}
We have almost proved our main theorem:
\begin{thm}\label{thm:algopres}
	Let $\Gamma$ be a co-compact Fuchsian group with signature 
	$(g,\nu_1,\ldots, \nu_f)$ identified by a fundamental polygon and
	a side pairing. There exists an algorithm that
	computes a one-word 
	(resp. $m$-handles, resp. geometric) presentation of $\Gamma$
	in time $\tilde O(\mu+\sum_{i=1}^f\log(\nu_i))$ (resp. 
	$\tilde O(m.\mu+\sum_{i=1}^f\log(\nu_i))$, resp. $\tilde O(\mu^2+\sum_{i=1}^f\log(\nu_i))$).
\end{thm}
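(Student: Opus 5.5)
The plan is to assemble the algorithms of this section into a single pipeline and add up their costs, using Lemma~\ref{lem:asymptotic} to turn bounds in $n$, the number of edges of $P$, into bounds in $\mu$. The input is $P$ with its side pairing $(\sigma_1,\varphi)$, of size $O(n)=O(\mu)$.

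First, compute $\sigma_2$ and the vertex permutation $\sigma_0=\sigma_1\sigma_2^{-1}$ in time $O(n)$. Next, run Algorithms~\ref{algo:dfsordering} and \ref{algo:T} together (a single depth first search) to get $T$, the ordering $h$ and the predecessor map $\phi$. Then Algorithm~\ref{algo:gis} produces the straight line program for the $g_j$, and Algorithm~\ref{algo:gamma} produces the word $\gamma$. Finally, Algorithm~\ref{algo:mainslp} combines these into a straight line program computing $(e_1,\dots,e_{n'},\gamma_1,\dots,\gamma_f)$ from the side pairing generators.

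Correctness comes from Lemma~\ref{lem:mainlemma}, Theorem~\ref{thm:fgge} and the discussion of Section~\ref{subsec:setting}. They give the relation $w\cdot\prod_j\gamma_j$ in $\pi_1(G^*,V(G^*))$. Passing to $\Gamma$ through $\phi$, the kernel is generated by the conjugates of $\delta_i^{\nu_i}$, which yields the relations $\gamma_j^{\nu_j}$ (Theorem~\ref{thm:devissage}, Remark~\ref{rem:orblooporder}).

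The elliptic orders $\nu_j$ must also be computed. Each $\nu_j$ is the length of the corresponding cycle of $\sigma_0$ multiplied by the order of the cycle product in $\Gamma$. I would obtain it from the angle sum at the vertex cycle ($2\pi/\nu_j$) or from the trace of the cycle product. Writing $\nu_j$ in the output costs $O(\log\nu_j)$ bits, which accounts for the term $\sum_i\log\nu_i$. By the complexity analysis in Section~\ref{subsec:complexity1}, all the steps above run in $\tilde O(n)$: the faces of the dual embedding have bounded size, and each edge of $T$ is crossed at most twice.

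For the $m$-handle case, apply the recursion of Section~\ref{subsec:mhandles} $m$ times. Each step locates a suitable pair $(c,d)$ and performs three cut-and-paste moves. These moves rewrite the word in $\tilde O(n)$ and append $O(n)$ instructions to the straight line program for the new generators $f$ and $h$, so the total is $\tilde O(m n)$. Taking $m=g\le n$ gives the geometric case in $\tilde O(n^2)$. Lemma~\ref{lem:asymptotic} gives $n=O(\mu)$, which converts all three bounds into the stated ones.

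The main obstacle is making sure each handle extraction really costs $\tilde O(n)$ and not more. There are two points to control. Finding $c,d$ with the interleaved pattern $c\ldots d\ldots\bar c\ldots\bar d$ in the remaining word must be done in linear time. I would do this by fixing $c$ as the first letter after $\zeta$ and scanning the word between $c$ and $\bar c$ for a letter whose inverse lies outside that stretch; such a letter exists because $w$ has a single vertex. The rewritten generators must also be stored as straight line programs and never expanded, as the remark on exponential word length shows. Finally, one has to check that inverting the program (Remark~\ref{rem:invslp}), if it is needed, stays linear.
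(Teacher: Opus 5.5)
Your proposal is correct and follows the paper's proof: you chain the algorithms of Section~\ref{sec:algos}, use the cost analysis of Section~\ref{subsec:complexity1} and Lemma~\ref{lem:asymptotic} to pass from $n$ to $\mu$, and read each $\nu_i$ off the angle sum $2\pi/\nu_i$ of its vertex cycle (the paper justifies the $\log\nu_i$ term by the precision $1/(\nu_i(\nu_i+1))$ needed to separate $2\pi/\nu_i$ from $2\pi/(\nu_i+1)$, not by output size). There are two slips to fix: $\nu_j$ is the order of the cycle product itself, since that product generates the vertex stabiliser, not the cycle length times that order (your formula would make accidental cycles of length $3$ look elliptic of order $3$); and the bounded face size you invoke holds for the domains of Theorem~\ref{thm:algofdom}, whereas for an arbitrary fundamental polygon the $\tilde O(n)$ bound for the $g_j$ comes from reusing nested paths $\alpha_j$ inside a face, as in Section~\ref{subsec:complexity1}.
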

\begin{proof}
	The theorem follows from Lemma \ref{lem:asymptotic}, the complexity 
	analysis of Algorithm \ref{algo:mainslp} made in Section \ref{subsec:complexity1} and the following remark.
	We assume given a geometric representation of a fundamental
	polygon $P$. That is a representation for its edges in 
	$\h$ and a set of real numbers representing angles 
	for each successive edges. For each word $c=e_1\ldots e_k$
	of edges representing a vertex of $P$, and if the 
	angle between $e_ie_{i\mod k + 1}$ is $\alpha_i$,
	we have $\sum_{i=1}^k \alpha_i=2\pi/m$ for some integer $m$.
	Further, when 
	$c$ is elliptic and of order $\nu_i$, we have $m=\nu_i$.
	For each $e\in c$, assuming each $\alpha_i$
	is given with precision at least 
	$\frac{1}{\nu_i(\nu_i+1)}$, we can recover $\nu_i$ in time
	$O(\log(\nu_i^2))=O(\log(\nu_i))$. Now each elliptic
	element $\gamma_j$ of a special presentation corresponds
	uniquely to some $\nu_i$ and the association is given
	by the function $h$ computed in Algorithm 
	\ref{algo:dfsordering}.
\end{proof}

\begin{ex}[Geometric presentation for $\Gamma$ over $\Q(\sqrt8)$.]\label{ex:geomprescongsg13}
	Using the notations of example \ref{ex:congsg13} and figure \ref{fig:gis},
	following Algorithms \ref{algo:gis} and \ref{algo:gamma} we compute
	$\gamma_1=\bar{a}$, $\gamma_2=a\bar{b}\bar{e}$, $\gamma_3=h_3.(b\bar{c}d).\overline{h_3}$,
$\gamma_4=h_3.(\bar{d}ec).\bar{h_3}$ and
	$\gamma=\gamma_1.\gamma_2\gamma_3\gamma_4=\bar{c}.ec\bar{e}$.
	As $a$ is the only elliptic, from the signature of $\Gamma$ we obtain the 
	geometric presentation
    \[ \Gamma\simeq\langle
      c,e,\gamma_1,\gamma_2,\gamma_3,\gamma_4\mid[e,c^{-1}]\gamma_1\gamma_2\gamma_3\gamma_4=1,~\gamma_1^3=1\rangle. \]
\end{ex}

\section{Cohomology of Fuchsian groups}
\label{sec:cohomology}

Our goal is to compute the space~$M_{k+2}(\Gamma)$ of modular forms of
weight~$k+2$, where~$k\ge 0$ is an even integer; we refer to
\cite[Section~43.9]{quatbook} or~\cite[Chapter~2]{shimura} for definitions.
By the Eichler--Shimura isomorphism, this reduces to the computation of a
cohomology group; we refer to~\cite{greenbergvoightHilbert,dembelevoightHilbert}
for more details and focus on this computation.
For an integer~$k\ge 0$, we denote~$\Sym^k(\C^2)$ the $k$-th symmetric power of
the standard representation of~$\SL_2(\C)$ on~$\C^2$; when~$k$ is even this
action descends to~$\PSL_2(\C)$ and therefore makes sense as a representation of
any Fuchsian group.

\begin{defn}\label{def:cohom}
  Let~$\Gamma$ be a group and~$V$ a representation of~$\Gamma$.
  Define the \emph{space of $1$-cocycles}
    $Z^1(\Gamma,V) = \{\sigma\colon \Gamma \to V \mid \sigma(ab) =
    \sigma(a)+a\sigma(b) \text{ for all }a,b\in\Gamma\}$,
  the \emph{space of $1$-coboundaries}
    $B^1(\Gamma,V) = \{a \mapsto (a-1)v : v\in V\} \subset Z^1(\Gamma,V)$
  and the \emph{first cohomology group}
    $H^1(\Gamma,V) = Z^1(\Gamma,V) / B^1(\Gamma,V)$.
  Let~$e_1,\dots,e_n$ be generators of~$\Gamma$. Evaluation on the generators
  defines an injection
  $Z^1(\Gamma,V) \hookrightarrow V^n$.
  By \emph{computing $H^1(\Gamma,V)$}, we mean computing a subspace~$H$ of~$V^n$
  such that
  $Z^1(\Gamma,V) = B^1(\Gamma,V) \oplus H \subset V^n$.
\end{defn}

As long as we can express arbitrary elements of~$\Gamma$ as words in the
generators, this representation of~$H^1(\Gamma,V)$ is suitable for the
computation of Hecke operators
(see~\cite{greenbergvoightHilbert,dembelevoightHilbert} for details).
If the fundamental polygon is a Dirichlet domain,
this can be done as long as we can express the generators of the
special presentation in terms of the ones
corresponding to the fundamental polygon and conversely.
These expressions are obtained as straight-line programs by our
Algorithm~\ref{algo:mainslp} and Remark \ref{rem:invslp}, and it is clear that
cocycles can be evaluated on elements given by straight-line programs.
At the time of writing we have not yet implemented the Hecke action.

The main problem we want to solve is the following.

\begin{prob}
  Given~$k$ and~$\Gamma$, compute~$H^1(\Gamma,\Sym^k(\C^2))$.
\end{prob}

From an arbitrary presentation of total size~$O(n)$, we can apply linear algebra
directly to compute~$H^1(\Gamma,V)$ in time~$O((n\dim V)^\omega)$, but we will
reduce the dependence on~$n$ by using the topological methods of
Section~\ref{sec:topology}.


As a warm-up, we explain the easy case~$k=0$.
From a one word presentation of a Fuchsian group~$\Gamma$,
we can
efficiently compute
$H^1(\Gamma,\C) = \Hom(\Gamma,\C)$. Indeed, elliptic elements map to~$0$, and in the
remaining one-word relation,
the hyperbolic generators cancel modulo commutators
(see Section~\ref{subsec:fgge})
so there is no
additional constraint: the morphisms~$\sigma_i \colon (e_j \mapsto
\delta_{i=j}, \gamma_j \mapsto 0)$ form a basis of~$H^1(\Gamma,\C)$.
In this section we show that one can obtain similar efficiency
for~$k>0$
from a one-handle presentation.

\begin{nota}
  Let~$\Gamma$ be a group and~$V$ a representation of~$\Gamma$, and let~$a,b\in \Gamma$.
  Write~$\Phi_{a,b}\colon V^2 \to V$ the map defined by
  \[
    \Phi_{a,b}(v_1,v_2) = (1-aba^{-1})v_1 + a(1-ba^{-1}b^{-1})v_2.
  \]
  Write~$\Psi_{a,b}\colon V \to V^2$ the map defined by
    $\Psi_{a,b}(v) = ((a-1)v, (b-1)v)$.
\end{nota}

Throughout this section, 
  let $\Gamma$ be a co-compact Fuchsian group, let
  \[
    \langle a,b,e_3,\dots,e_{2g},\gamma_1,\dots,\gamma_f
    \mid [a,b]w, \gamma_1^{\nu_1},\dots \gamma_f^{\nu_f}\rangle
  \]
  be a one-handle
  presentation of~$\Gamma$, and
  let~$V$ be a finite dimensional representation
  of~$\Gamma$ over a field~$K$.
We now provide an explicit description of~$H^1(\Gamma,V)$ under mild hypotheses.

\begin{lem}\label{lem:Z1onehandle}
  Assume that~$\Phi_{a,b}$ is surjective.
  Let~$n=2g+f$. For each~$2<i\le n$, define the subspace~$N_i\subset V^n$ as
  follows. Let~$\gamma$ be the $i$-th generator, i.e. $\gamma=e_i$ if~$i\le 2g$
  and $\gamma = \gamma_{i-2g}$ otherwise. Define
  \begin{itemize}
    \item $V'=V$ if~$\gamma$ is hyperbolic, and
    \item $V' = \ker(1+\gamma+\dots+\gamma^{\nu-1})$ if~$\gamma$ is elliptic of
      order~$\nu$.
  \end{itemize}
  For each~$v'\in V'$, let~$(v_1,v_2)\in V^2$ be a solution
  of~$\Phi_{a,b}(v_1,v_2) = v''$ where~$v''$ is the sum of all terms of the
  following form, one for each occurrence of~$\gamma$ or~$\gamma^{-1}$ in~$w$:
  \begin{itemize}
    \item $w_1\cdot v'$ for every word decomposition~$w = w_1\gamma w_2$, and
    \item $-w_1\gamma^{-1}\cdot v'$ for every word decomposition~$w = w_1 \gamma^{-1} w_2$.
  \end{itemize}
  Define~$N_i$ to be the span of the vectors~$(v_1,v_2,0,\dots,0,v',0,\dots,0)$
  where~$v'$ ranges over a basis of~$V'$.

  Then
  \[
    Z^1(\Gamma,V) = (\ker\Phi_{a,b}\times 0^{n-2}) \oplus \bigoplus_{i=3}^n N_i
    \subset V^n.
  \]
\end{lem}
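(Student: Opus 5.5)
The plan is to use the standard description of $1$-cocycles on a finitely presented group. A cocycle $\sigma\in Z^1(\Gamma,V)$ is determined by $(\sigma(a),\sigma(b),\sigma(e_3),\dots,\sigma(\gamma_f))\in V^n$. A tuple $(v_1,\dots,v_n)\in V^n$ arises from a cocycle if and only if the cocycle on the free group with these values vanishes on every relator. This is the Fox-calculus criterion: a cocycle on the free group is freely determined by its values on generators, and it descends to $\Gamma$ exactly when it kills the normal closure of the relators. Killing the normal closure reduces to killing the relators themselves, since $\sigma(grg^{-1})=\sigma(g)+g\sigma(r)-grg^{-1}\sigma(g)$ and $grg^{-1}$ acts trivially on $V$. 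So I will write out the linear conditions imposed by each relator and solve them.

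\emph{Elliptic relators.} Iterating the cocycle identity gives $\sigma(\gamma^\nu)=(1+\gamma+\dots+\gamma^{\nu-1})\sigma(\gamma)$. Hence the relation $\gamma_j^{\nu_j}$ says exactly that $\sigma(\gamma_j)$ lies in $V'$ as defined in the statement. Hyperbolic generators $e_i$ are unconstrained by these relators, matching $V'=V$.

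\emph{Long relator.} Expanding $\sigma(aba^{-1}b^{-1})$ with $\sigma(x^{-1})=-x^{-1}\sigma(x)$ gives $\sigma([a,b])=\Phi_{a,b}(\sigma(a),\sigma(b))$. Expanding $\sigma(w)$ along the word gives one term per occurrence of a letter: $w_1\sigma(\gamma)$ for each decomposition $w=w_1\gamma w_2$, and $-w_1\gamma^{-1}\sigma(\gamma)$ for each decomposition $w=w_1\gamma^{-1}w_2$. By hypothesis each $e_j^{\pm1}$ and each $\gamma_j$ appears in $w$, each generator occurring once. The relation $\sigma([a,b]w)=0$ therefore becomes a single linear equation
\[
\Phi_{a,b}(v_1,v_2)=\sum_{i=3}^n L_i(v_i),
\]
where $L_i\colon V\to V$ collects the contributions of the occurrences of the $i$-th generator. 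Up to the conventions fixed by the relator (the factor $[a,b]=w^{-1}$ in $\Gamma$ multiplying $\sigma(w)$, and the overall sign), the value $L_i(v')$ is the vector $v''$ of the statement. This bookkeeping step is where I expect the only real difficulty. I will verify carefully that, with the orientation of the relator as written, the right-hand side is exactly $v''$. If there is a discrepancy, I will check whether it is only a sign or an invertible twist by an element of $\Gamma$. Such a twist does not affect the argument, because the proof only uses that the right-hand side is a sum of linear maps applied separately to each $v_i$.

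\emph{Decomposition.} With this description, $Z^1=\{(v_i): v_i\in V'_i \text{ for } i\ge3,\ \Phi_{a,b}(v_1,v_2)=\sum_i L_i(v_i)\}$. Surjectivity of $\Phi_{a,b}$ guarantees that for each basis vector $v'$ of $V'_i$ a solution $(v_1,v_2)$ exists, so $N_i$ is well defined. Each generating vector of $N_i$ satisfies the equations, so $N_i\subset Z^1$, and clearly $\ker\Phi_{a,b}\times 0^{n-2}\subset Z^1$. Extending the chosen solutions linearly from the basis gives a linear section $s_i\colon V'_i\to N_i$ whose $i$-th coordinate is the identity and whose other coordinates beyond the first two are $0$.

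Given any $z=(v_i)\in Z^1$, the element $z-\sum_{i\ge3}s_i(v_i)$ lies in $Z^1$ and has zero coordinates $3,\dots,n$. Its equation then reads $\Phi_{a,b}(v_1',v_2')=0$, so it lies in $\ker\Phi_{a,b}\times0^{n-2}$; this proves that the sum spans $Z^1$. For directness, project onto coordinates $3,\dots,n$: the $N_i$ map injectively to the distinct factors $V'_i$, and $\ker\Phi_{a,b}\times0$ maps to $0$. So if a sum of elements from the summands vanishes, each $N_i$-component is $0$, and then the remaining kernel component is $0$ as well.
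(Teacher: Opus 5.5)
Your proposal is correct and follows the paper's proof. The elliptic relators force $\sigma(\gamma_j)\in\ker(1+\gamma_j+\dots+\gamma_j^{\nu_j-1})$, the long relator becomes $0=\Phi_{a,b}(\sigma(a),\sigma(b))+[a,b]\sigma(w)$, and surjectivity of $\Phi_{a,b}$ leaves the remaining coordinates free; your descent criterion for cocycles on a presented group and your projection argument for directness fill in details the paper leaves implicit.

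On the bookkeeping you deferred, the discrepancy is real. Expanding as the paper does gives $L_i(v')=-[a,b]\,v''=-w^{-1}v''$, so your argument proves the lemma with $N_i$ defined by $\Phi_{a,b}(v_1,v_2)=-[a,b]\,v''$. With the literal $v''$ the listed vectors are in general not cocycles. So this is a correction to the statement, not a harmless change of convention, and you should state it that way.
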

\begin{proof}
  Let~$\sigma\in Z^1(\Gamma,V)$.
  For an elliptic generator~$\gamma$, the cocycle condition evaluated on the relation~$\gamma^{\nu}$ gives
  $\sigma(\gamma) \in \ker(1+\gamma+\dots+\gamma^{\nu-1})$.
  The one-handle relation can be written
  $0 = \sigma([a,b]w) = \Phi_{a,b}(\sigma(a),\sigma(b)) + [a,b]\sigma(w)$.
  Since~$\Phi_{a,b}$ is surjective, this means that the~$\sigma(e_i)$ for~$i>2$
  and~$\sigma(\gamma_j)$ can take arbitrary values satisfying the elliptic
  relations, and that~$\sigma(a)$ and~$\sigma(b)$ are constrained only by the
  above equation. This proves that the claimed decomposition is correct.
\end{proof}

\begin{lem}\label{lem:H1onehandle}
  Assume~$g\ge 2$ and let~$c=e_3$ and~$d = e_4$.
  Assume that~$\Phi_{a,b}$ is surjective and~$\Psi_{c,d}$ is injective.
  Then
  \[
    Z^1(\Gamma,V) = B^1(\Gamma,V) \oplus (\ker\Phi_{a,b}\times 0^{n-2}) \oplus
    N_{3,4} \oplus \bigoplus_{i>4} N_i
    \subset V^n,
  \]
  where the~$N_i$ are as in Lemma~\ref{lem:Z1onehandle} and  $N_{3,4}$ is formed
  as~$N_3\oplus N_4$ in Lemma~\ref{lem:Z1onehandle} with~$V^2$ replaced by a complement of~$\im\Psi_{c,d}$ in~$V^2$.
\end{lem}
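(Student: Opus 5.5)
Starting from the decomposition of Lemma~\ref{lem:Z1onehandle}, I only need to find a complement of $B^1(\Gamma,V)$ inside $Z^1(\Gamma,V)$. The plan is to show that $B^1$ meets the new candidate sum trivially and that the dimensions add up correctly.

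First, I describe $B^1(\Gamma,V)$ in coordinates. The map $v\mapsto ((x-1)v)_{x}$, evaluated on the generators, sends $v$ to the vector whose $3$rd and $4$th coordinates are $(c-1)v$ and $(d-1)v$, that is, $\Psi_{c,d}(v)$. Since $\Psi_{c,d}$ is injective, the coboundary map $V\to B^1(\Gamma,V)$ is injective, so $\dim B^1=\dim V$. The coordinates of a coboundary on $a$, $b$ and on the generators of index $>4$ are determined by $v$, and therefore by $\Psi_{c,d}(v)$.

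Next, I refine the decomposition $N_3\oplus N_4$. In Lemma~\ref{lem:Z1onehandle} the $(3,4)$-coordinates range over all of $V^2$, because $c,d$ are hyperbolic and so $V'=V$. The map $V^2\to N_3\oplus N_4$, sending $(v',v'')$ to the vector with $(v_1,v_2)$ solved linearly plus $v',v''$ in positions $3,4$, is a linear isomorphism onto $N_3\oplus N_4$, provided the solution of $\Phi_{a,b}$ is chosen through a fixed linear section. Let $W$ be a complement of $\im\Psi_{c,d}$ in $V^2$, and let $N_{3,4}$ be its image under this isomorphism. Then $N_3\oplus N_4 = N_{3,4}\oplus M$, where $M$ is the image of $\im\Psi_{c,d}$, and $\dim M=\dim V$.

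Then I show that $B^1$ can replace $M$. Write $X=(\ker\Phi_{a,b}\times 0)\oplus N_{3,4}\oplus\bigoplus_{i>4}N_i$, so that $Z^1 = X\oplus M$. I claim $B^1\cap X=0$. Take a coboundary $\beta_v$ lying in $X$. Projecting $Z^1$ onto the $(3,4)$-coordinates sends $\ker\Phi_{a,b}\times 0$ and $\bigoplus_{i>4}N_i$ to $0$, sends $N_{3,4}$ onto $W$, and sends $\beta_v$ to $\Psi_{c,d}(v)$. Since $W\cap\im\Psi_{c,d}=0$, we get $\Psi_{c,d}(v)=0$, and injectivity then gives $v=0$. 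Finally, $\dim B^1=\dim V=\dim M$, so $\dim(B^1+X)=\dim Z^1$, which yields $Z^1=B^1\oplus X$. This is exactly the claimed decomposition.

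The main obstacle is the step with the projection. It requires that the $(3,4)$-projection of $Z^1$ be compatible with the decomposition, namely that $N_i$ for $i>4$ and $\ker\Phi_{a,b}\times 0$ have zero coordinates at positions $3$ and $4$. This holds by construction in Lemma~\ref{lem:Z1onehandle}. The other point to keep consistent is that $N_{3,4}$ must be defined using a linear choice of solutions $(v_1,v_2)$, so that it is a genuine subspace of dimension $\dim W$.
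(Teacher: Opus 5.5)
Your proposal is correct and follows essentially the same route as the paper's terse proof. Both project onto the $3$rd and $4$th coordinates, where coboundaries land in $\im\Psi_{c,d}$ and $N_{3,4}$ lands in the chosen complement, and then combine the injectivity of $\Psi_{c,d}$ with Lemma~\ref{lem:Z1onehandle}; your dimension count (and the remark about choosing solutions linearly) spells out the spanning part that the paper leaves implicit.
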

\begin{proof}
  First, we have~$N_{3,4}\subset Z^1(\Gamma,V)$ by construction.
  Moreover, the map~$\Psi_{c,d}$ is injective, therefore by restricting to the $3$rd and
  $4$th components and applying Lemma~\ref{lem:Z1onehandle} we see that $B^1(\Gamma,V)$ and
  the subspace described in the statement indeed decompose $Z^1(\Gamma,V)$ into
  a direct sum.
\end{proof}

\begin{lem}\label{lem:injsurj}
  Assume~$\dim V\ge 2$.
  Let~$a,b\in \Gamma$ be such that~$\langle a,b\rangle$ acts irreducibly on~$V$
  and assume that~$\dim V^a \le 1$ and~$\dim V^b \le 1$.
  Then~$\Phi_{a,b}$ is surjective and~$\Psi_{a,b}$ is injective.
\end{lem}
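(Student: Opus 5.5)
The plan is to treat the two claims separately. Injectivity of $\Psi_{a,b}$ is a direct fixed-point argument. Surjectivity of $\Phi_{a,b}$ I would reduce to the same fixed-point argument on the dual representation $V^*$.

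\emph{Injectivity of $\Psi_{a,b}$.} The kernel of $\Psi_{a,b}$ is $V^a\cap V^b$. Since $a$ and $b$ generate $\langle a,b\rangle$, this is exactly the space $V^{\langle a,b\rangle}$ of vectors fixed by $\langle a,b\rangle$. It is a $\langle a,b\rangle$-subrepresentation of $V$, so by irreducibility it is either $0$ or $V$. If it were $V$, then $V^a=V$ would have dimension $\dim V\ge 2$, contradicting $\dim V^a\le 1$. Hence $\ker\Psi_{a,b}=0$.

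\emph{Surjectivity of $\Phi_{a,b}$.} First I rewrite the image using the identity $\im(1-xgx^{-1}) = x\,\im(1-g)$. This gives
\[
\im\Phi_{a,b} = \im(1-aba^{-1}) + a\,\im(1-ba^{-1}b^{-1}) = a\,\im(1-b) + ab\,\im(1-a^{-1}).
\]
Since $\im(1-a^{-1})=\im(1-a)$, it follows that
\[
\im\Phi_{a,b} = a\bigl(\im(1-b) + b\,\im(1-a)\bigr).
\]
Now I dualize. A linear form $\lambda\in V^*$ annihilates $\im(1-b)+b\,\im(1-a)$ if and only if $\lambda\circ(1-b)=0$ and $\lambda\circ b\circ(1-a)=0$. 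The first condition says $\lambda\circ b=\lambda$. Substituting this into the second gives $\lambda\circ(1-a)=0$. So $\lambda$ is fixed by both $a$ and $b$ under the contragredient action, that is, $\lambda\in (V^*)^{\langle a,b\rangle}$.

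The dual $V^*$ is again irreducible under $\langle a,b\rangle$, and $\dim V^*\ge 2$. Moreover $\dim (V^*)^a = \dim\ker\bigl({}^t(1-a^{-1})\bigr) = \dim\ker(1-a^{-1}) = \dim V^a\le 1$, using that a matrix and its transpose have equal rank. The same holds for $b$. Running the injectivity argument for $V^*$ then gives $(V^*)^{\langle a,b\rangle}=0$. Hence no nonzero form annihilates $\im(1-b)+b\,\im(1-a)$, so this subspace is all of $V$, and $\im\Phi_{a,b}=a\cdot V=V$.

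I expect the only delicate step to be the image computation, which must correctly turn the two conjugated terms into $a\,\im(1-b)$ and $ab\,\im(1-a)$. The rest is a dimension/rank argument. Note that only irreducibility and the bounds on fixed spaces are used; the hypothesis $\dim V\ge 2$ serves solely to exclude the case where the fixed space is all of $V$.
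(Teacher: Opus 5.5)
Your proof is correct. The injectivity half matches the paper, but your surjectivity argument takes a genuinely different route.

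The paper argues by codimension. Set $W_1=\im(1-aba^{-1})$ and $W_2=\im\bigl(a(1-ba^{-1}b^{-1})\bigr)$. The bounds $\dim V^a,\dim V^b\le 1$ give $\codim W_i\le 1$, so if $W_1+W_2$ were proper it would force $W_1=W_2$, a nonzero proper subspace (using $\dim V\ge 2$). Then $a^{-1}W_1=\im(1-b)$ would be stable under both $b$ and $ba^{-1}b^{-1}$, hence under $\langle a,b\rangle$, contradicting irreducibility.

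You instead rewrite $\im\Phi_{a,b}=a\bigl(\im(1-b)+b\,\im(1-a)\bigr)$ and observe that the annihilator of the bracket is exactly $(V^*)^{\langle a,b\rangle}$. This turns surjectivity of $\Phi_{a,b}$ on $V$ into injectivity of $\Psi_{a,b}$ on the contragredient $V^*$. That duality explains why the two statements come as a pair, and it lets a single fixed-point argument handle both.

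A by-product of your route is that the hypotheses $\dim V^a,\dim V^b\le 1$ are not actually needed. If $V^{\langle a,b\rangle}$ (or $(V^*)^{\langle a,b\rangle}$) were the whole space, $\langle a,b\rangle$ would act trivially, and any line would be a proper invariant subspace since $\dim V\ge 2$. So irreducibility and $\dim V\ge 2$ alone suffice. By contrast, the paper's codimension count uses those fixed-space bounds essentially. Your closing remark that the bounds are "used" is therefore accurate for your write-up, but they could be dropped from your argument.
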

\begin{proof}
  Let~$W_1 = \im(1-aba^{-1})$ and~$W_2 = \im (a(1-ba^{-1}b^{-1}))$, so that we
  have the equality~$\im \Phi_{a,b} = W_1 + W_2$.
  We have~$\codim W_1 = \dim \ker(1-aba^{-1}) = \dim V^b \le 1$
  and similarly~$\codim W_2 \le 1$.

  Assume for contradiction that~$W_1+W_2\neq V$.
  Then by the previous codimension inequalities we must have~$W_1 = W_2 \neq V$,
  and~$W_1\neq 0$ since~$\dim V\ge 2$.
  Now~$a^{-1}W_1$ is stable under~$b$ and~$a^{-1}W_2 = \im (1-ba^{-1}b^{-1})$ is
  stable under~$ba^{-1}b^{-1}$.
  Therefore~$a^{-1}W_1=a^{-1}W_2$ is stable under
    $\langle b, ba^{-1}b^{-1}\rangle
    = \langle b, a^{-1}\rangle
    = \langle a,b \rangle$,
  but this contradicts the irreducibility hypothesis. Therefore~$W_1+W_2=V$.

  We have~$\ker\Psi_{a,b} = V^a \cap V^b$, on which~$\langle a,b\rangle$ acts
  trivially, and which has dimension at most~$1$. Since $\dim V\ge 2$, by
  the irreducibility hypothesis we must have~$V^a \cap V^b = 0$, so
  that~$\Psi_{a,b}$ is injective.
\end{proof}

We now show that the hypotheses on~$\Phi_{a,b}$ and~$\Psi_{c,d}$ are always
satisfied.

\begin{lem}\label{lem:zariskidense}
  Let~$a,b\in\SL_2(\C)$ be non-commuting hyperbolic elements such that the group~$\langle
  a,b\rangle$ contains no parabolic element.
  Then~$\langle a,b\rangle$ is Zariski-dense in~$\SL_2/\C$.
\end{lem}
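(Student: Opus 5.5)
Let $G$ denote the Zariski closure of $H=\langle a,b\rangle$ in $\SL_2$ over $\C$. I will go through the classification of proper algebraic subgroups of $\SL_2$ and rule each one out. Any proper Zariski-closed subgroup of $\SL_2(\C)$ satisfies one of the following:
\begin{enumerate}
\item it is finite;
\item its identity component is a torus, so it sits inside the normalizer of a maximal torus (conjugate to the diagonal torus $D$ or to $N(D)$);
\item its identity component is unipotent, or it sits in a Borel subgroup (conjugate to upper triangular matrices);
\item it has dimension $\le 1$ in other ways.
\end{enumerate}
In all cases, $G^\circ$ is solvable. Equivalently, $G\ne\SL_2$ exactly when $H$ is either virtually solvable, or $G^\circ\in\{1,\text{torus},\text{unipotent},\text{Borel}\}$. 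So it suffices to show that $G^\circ$ is not solvable. Since $\SL_2$ has dimension 3 and its only connected subgroup of dimension 3 is itself, this is enough.

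First I exclude finiteness. A hyperbolic element $a$ has eigenvalues $\lambda^{\pm1}$ with $|\lambda|\ne1$, so $a$ has infinite order and $G$ is infinite. In particular the Zariski closure of $\langle a\rangle$ is the full torus $T_a$ diagonalising $a$, because $\{\lambda^n\}$ is Zariski dense in $\mathbb G_m$ when $\lambda$ is not a root of unity. Similarly $G\supset T_b$. Both tori are one-dimensional, and $T_a\ne T_b$: hyperbolic elements sharing a torus share their eigenvectors and would commute, contradicting the hypothesis. So $G^\circ$ contains two distinct maximal tori.

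Next I rule out $G^\circ$ being a Borel subgroup $B$, i.e.\ the case where $a,b$ share a common fixed line (a common eigenvector, or a common fixed point on $\mathbb P^1$). This is the step where the no-parabolic hypothesis enters, and I expect it to be the main point. If $a,b$ lie in a common Borel, then the commutator $[a,b]$ is unipotent. It is nontrivial, since $a$ and $b$ do not commute, so it is parabolic, which is a contradiction. More carefully, $H$ might only normalize a Borel rather than lie in one, but the normaliser of a Borel is the Borel itself, so $H\subset G\subset B$ anyway. Finally, a connected solvable group containing two distinct maximal tori must be a Borel, since the torus case has a unique maximal torus. Hence $G^\circ$ is not solvable, so $G^\circ=\SL_2$ and $H$ is Zariski dense.

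The remaining subtlety is the normaliser-of-torus case, where $G^\circ=T$ is a single torus. It is already excluded because $T_a,T_b\subset G^\circ$ are distinct. I expect it to be routine to cite the classification of algebraic subgroups of $\SL_2$ (connected ones are $1$, $U$, $T$, $B$ and $\SL_2$ up to conjugacy), and I would state it in that form.
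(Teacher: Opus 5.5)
Your proof is correct, but it splits the problem differently from the paper.

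The paper starts from the list of maximal proper algebraic subgroups of $\SL_2$: finite groups, Borel subgroups, and normalisers of tori. It excludes each as a container for $G$:
\begin{itemize}
\item finite groups, because $a$ and $b$ have infinite order;
\item a Borel subgroup, because $[a,b]$ would then be parabolic or trivial;
\item $N(T)$, because every element of $N(T)\setminus T$ is elliptic, which forces $a,b\in T$ and hence $ab=ba$.
\end{itemize}

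You work with $G^\circ$ instead. Since $\lambda$ is not a root of unity, the Zariski closures of $\langle a\rangle$ and $\langle b\rangle$ are full tori $T_a\neq T_b$, and both lie in $G^\circ$. By the classification $1,U,T,B,\SL_2$ of connected subgroups, only a Borel or $\SL_2$ remains. You exclude the Borel with the same commutator argument as the paper, after using that Borel subgroups are self-normalising to get $H\subset G\subset B$.

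Comparing the two:
\begin{itemize}
\item Your route handles the finite and torus-normaliser cases uniformly, by counting distinct tori in $G^\circ$. It never needs the trace-zero observation about $N(T)\setminus T$.
\item In exchange, you rely on the classification of connected subgroups and on self-normalisation of Borels.
\item The paper's route is shorter, because quoting maximal subgroups places $G$ inside a Borel directly.
\end{itemize}

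One presentational point. Your opening list of proper closed subgroups is loose and is not used later; this includes item (4) (``dimension $\le 1$ in other ways'') and the ``either virtually solvable, or \dots'' sentence. The argument only needs the connected-subgroup classification you state at the end, so lead with that.
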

\begin{proof}
  The maximal algebraic subgroups of~$\SL_2$ are finite subgroups, Borel
  subgroups and normalisers of tori. Let~$G$ be the Zariski closure of~$\langle
  a,b\rangle$. Then
    $G$ cannot be contained in a finite group since $a$ and $b$ have
      infinite order;
    if~$G$ is contained in a Borel subgroup, then the commutator~$[a,b]$ is parabolic or
      the identity, but that is contrary to our hypothesis;
    if~$G$ is contained in the normaliser~$N$ of a torus~$T$, then every
      element of~$N(\C)\setminus T(\C)$ is elliptic so~$G\subset T$, but that
      contradicts the fact that~$a$ and~$b$ do not commute.
  Therefore~$G = \SL_2$ as claimed.
\end{proof}

\begin{lem}\label{lem:irred}
  Let~$\Gamma$ be a co-compact Fuchsian group and~$a,b\in\Gamma$ be noncommuting
  hyperbolic elements, and let~$k\ge 0$ be even.
  Then~$\langle a,b\rangle$ acts irreducibly on~$\Sym^k(\C^2)$.
\end{lem}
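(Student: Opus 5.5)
The strategy is to reduce to Lemma~\ref{lem:zariskidense} and then use that $\Sym^k(\C^2)$ is an irreducible algebraic representation of $\SL_2(\C)$.

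First, lift $a,b$ to elements $\tilde a,\tilde b\in\SL_2(\R)\subset\SL_2(\C)$. The action of $\langle a,b\rangle$ on $\Sym^k(\C^2)$ is the action of $\langle\tilde a,\tilde b\rangle$ followed by the quotient map, because $-1$ acts trivially when $k$ is even. So it suffices to show that $\langle \tilde a,\tilde b\rangle$ acts irreducibly.

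Next, check the hypotheses of Lemma~\ref{lem:zariskidense}.
\begin{itemize}
\item The lifts $\tilde a,\tilde b$ are hyperbolic, since $a,b$ are.
\item They do not commute. If $\tilde a\tilde b=\tilde b\tilde a$, then $a$ and $b$ would commute in $\PSL_2(\R)$, contrary to assumption.
\item The group $\langle\tilde a,\tilde b\rangle$ contains no parabolic element. Its image in $\PSL_2(\R)$ lies in $\Gamma$, and a co-compact Fuchsian group contains no parabolic elements. This is the standard fact that parabolic elements only occur in groups whose quotient has cusps, and I would cite \cite{katok} for it.
\end{itemize}
A parabolic element of $\SL_2(\C)$ here means a non-identity element with trace $\pm2$. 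One subtlety is that an element of $\langle\tilde a,\tilde b\rangle$ might be a non-identity element of trace $\pm 2$ that maps to the identity. But the only elements of $\SL_2$ mapping to the identity of $\PSL_2$ are $\pm1$, so the image of such an element is still parabolic, which is excluded. Lemma~\ref{lem:zariskidense} then shows that $H=\langle\tilde a,\tilde b\rangle$ is Zariski-dense in $\SL_2/\C$.

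Finally, suppose $W\subset\Sym^k(\C^2)$ is an $H$-stable subspace. The stabiliser of $W$ in $\SL_2$ is a Zariski-closed subgroup: it is the preimage of the closed stabiliser of a point of a Grassmannian under the algebraic map $\SL_2\to\GL(\Sym^k)$. This stabiliser contains $H$, so by density it is all of $\SL_2(\C)$. Since $\Sym^k(\C^2)$ is an irreducible representation of $\SL_2(\C)$, which is classical and follows for instance from the weight decomposition under the diagonal torus together with the raising operator, $W$ must be $0$ or everything.

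I expect the main obstacle to be making the absence of parabolics airtight, together with the lifting details between $\PSL_2$ and $\SL_2$. Everything else is a formal consequence of Zariski density.
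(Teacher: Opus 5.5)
Your route is the paper's: Lemma~\ref{lem:zariskidense} gives Zariski density, and irreducibility of the algebraic representation $\Sym^k(\C^2)$ then passes to $\langle a,b\rangle$ because the stabiliser of an invariant subspace is Zariski-closed. Your discussion of lifts to $\SL_2(\R)$ and of closed stabilisers spells out what the paper's two-line proof leaves implicit.

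One step is wrong as written: the verification that $\langle\tilde a,\tilde b\rangle$ contains no parabolic element. With your definition (non-identity, trace $\pm2$), the element $-1$ counts as parabolic, and it can lie in $\langle\tilde a,\tilde b\rangle$. For example, if some word $u$ in $a,b$ is an elliptic element of order $2$ of $\Gamma$, then $\operatorname{tr}\tilde u=0$ and $\tilde u^2=-1$ by Cayley--Hamilton. Your dismissal of this case (``the only elements mapping to the identity are $\pm1$, so the image is still parabolic'') fails precisely for $-1$, whose image is the identity. So under your own convention the hypothesis of Lemma~\ref{lem:zariskidense} is not established, and it need not hold.

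The fix is to use the standard convention that a parabolic element of $\SL_2(\C)$ has trace $\pm2$ and is different from $\pm1$. This is exactly what the proof of Lemma~\ref{lem:zariskidense} needs. In the Borel case the commutator $[\tilde a,\tilde b]$, which does not depend on the choice of lifts, is unipotent. It is therefore either $1$, which is excluded since $a,b$ do not commute, or a non-central element of trace $2$. With this convention, a parabolic $y\in\langle\tilde a,\tilde b\rangle$ satisfies $y\neq\pm1$. Its image in $\Gamma$ is then a non-identity element with $|\mathrm{tr}|=2$, i.e.\ a parabolic element of $\Gamma$, which cannot exist since $\Gamma$ is co-compact. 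With that correction your argument is complete.
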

\begin{proof}
  By Lemma~\ref{lem:zariskidense}, $\langle a,b\rangle$ is Zariski-dense in $\SL_2$.
  But~$\Sym^k(\C^2)$ is irreducible as an algebraic representation of~$\SL_2$,
  so it is irreducible as a representation of~$\langle a,b\rangle$.
\end{proof}

We are finally in position to prove the main result of this section.

\begin{prop}\label{prop:computeH1}
  Assume~$g\ge 2$.
  Then from the one-handle presentation of~$\Gamma$ and an explicit representation~$\rho\colon \Gamma\to
  \GL(V)$ over a field~$K\subset \C$ such that~$V\otimes_K \C \cong
  \Sym^k(\C^2)$, one can compute a basis of~$H^1(\Gamma,V)$
  in~$O((k+1)^\omega (n+\sum_j \log \nu_j))$
  operations in~$K$, where~$n = 2g+f$.
\end{prop}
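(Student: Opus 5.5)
We reduce to Lemma~\ref{lem:H1onehandle} and then bound the cost of assembling the subspaces it describes. First we check its hypotheses. In a one-handle presentation with $g\ge 2$, the generators $a,b,c=e_3,d=e_4$ are hyperbolic: $\Gamma$ is co-compact, so it has no parabolics, and these generators are topological. Moreover $a$ and $b$ do not commute. If they did, the abelianisation argument applied to the quotient surface group would contradict the fact that their images in $H_1$ of the surface have intersection number $1$. Concretely, in $\pi_1$ of the genus $g$ surface the images of $a$ and $b$ form a handle, so they generate a free group of rank $2$ there. The same holds for $c$ and $d$, after rewriting so that $c,d$ also appear as a handle; alternatively one can use that $e_3,e_4$ come from the next handle extraction of Section~\ref{subsec:mhandles}, so we apply that extraction twice. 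By Lemma~\ref{lem:irred}, $\langle a,b\rangle$ and $\langle c,d\rangle$ act irreducibly on $V\otimes\C\cong\Sym^k(\C^2)$, and irreducibility descends to $V$ over $K$. A hyperbolic element of $\SL_2(\R)$ has distinct eigenvalues $\lambda^{\pm1}$, so its fixed space on $\Sym^k$ is spanned by the weight-zero vector and has dimension $\le1$. For $k\ge 2$, Lemma~\ref{lem:injsurj} then gives that $\Phi_{a,b}$ is surjective and $\Psi_{c,d}$ is injective. The case $k=0$ is the warm-up computation already described.

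Next we build the basis following Lemma~\ref{lem:H1onehandle}. The blocks $\ker\Phi_{a,b}$, a complement of $\im\Psi_{c,d}$ in $V^2$, and a right inverse of $\Phi_{a,b}$ are each computed once by linear algebra on matrices of size $O(k+1)$, at cost $O((k+1)^\omega)$. For each elliptic generator $\gamma_j$ we need $V'=\ker(1+\gamma_j+\dots+\gamma_j^{\nu_j-1})$. We compute this matrix by a binary-powering recursion, using $S_{2m}=S_m(1+\gamma^m)$ and $S_{m+1}=1+\gamma S_m$. This takes $O(\log\nu_j)$ products of $(k+1)\times(k+1)$ matrices, and then one kernel computation. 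This accounts for the $\sum_j\log\nu_j$ term.

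The main obstacle is computing, for each generator $\gamma$ and each basis vector $v'$ of $V'$, the right-hand side $v''$, which is a sum over the occurrences of $\gamma^{\pm1}$ in $w$ of prefix actions $w_1\cdot v'$. Done naively, this costs $O(n)$ matrix products per generator, and hence is quadratic in $n$. The fix is the structure of $w$: it is a product of the $e_j^{\pm1}$, each appearing exactly once, followed by $\prod\gamma_j$. So each generator occurs exactly once, and $v''$ is a single term $\pm M\cdot v'$, where $M=\rho(w_1)$ or $\rho(w_1\gamma^{-1})$ is a prefix matrix. We compute all prefix products $\rho(w_1)$ along $w$ incrementally, one matrix multiplication per letter, for a total of $O(n(k+1)^\omega)$. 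For each generator we apply $M$ to the whole basis of $V'$ as one matrix product, and multiply by the precomputed right inverse of $\Phi_{a,b}$ to get $(v_1,v_2)$. For $N_{3,4}$ we do the same, then project onto the chosen complement of $\im\Psi_{c,d}$, again at cost $O((k+1)^\omega)$. Summing over the $n$ generators gives $O((k+1)^\omega(n+\sum_j\log\nu_j))$ operations in total. The output, namely the blocks $\ker\Phi_{a,b}\times0$, $N_{3,4}$ and $N_i$ for $i>4$, is the required complement $H$ of $B^1(\Gamma,V)$ in $Z^1(\Gamma,V)$ in the sense of Definition~\ref{def:cohom}.
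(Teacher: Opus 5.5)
Your route is the paper's: you verify the hypotheses of Lemma~\ref{lem:injsurj} through Lemma~\ref{lem:irred}, then assemble the blocks of Lemma~\ref{lem:H1onehandle} from stored prefix products of the relator and from binary powering for the elliptic norms. Your final bound is right, but two steps need repair.

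First, the step you present as the key fix rests on a misreading: the topological generators $e_i$ ($i\ge3$) do not occur once in $w$. A word representing the surface contains every edge exactly twice, once as $e_i$ and once as $e_i^{-1}$. That is what ``each of the $e_j^{\pm1}$ appearing exactly once'' means, and it is also why the $k=0$ warm-up works: the $e_i$ cancel in the abelianisation. So for $\gamma=e_i$ the vector $v''$ is a sum of two terms, $\rho(w_1)v'-\rho(w_2e_i^{-1})v'$, where $w=w_1e_iw_1'=w_2e_i^{-1}w_2'$. Only the elliptic $\gamma_j$ contribute a single term. The complexity still holds: with all prefix matrices stored, the total work is proportional to the number of letters of $w$, namely $2(2g-2)+f=O(n)$. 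No uniqueness of occurrences is needed, and the ``naive quadratic'' worry never arises.

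Second, you have not established that $c=e_3$ and $d=e_4$ do not commute. In a one-handle word these two edges need not interleave, so their algebraic intersection number can be $0$, and your argument for $a,b$ does not transfer. Switching to a two-handle presentation would work, but it changes the input of the statement, and it is unnecessary. The $2g$ edges of a one-vertex, one-face embedding form a basis of $H_1(S,\Z)$, since the face boundary is $0$ in cellular chains (each edge occurs with both signs). Hence the images of $e_3$ and $e_4$ in $\pi_1(S)$ have linearly independent homology classes. But commuting elements of a surface group of genus $\ge2$ are powers of a common element, so $e_3$ and $e_4$ cannot commute. The same observation supplies what your phrase ``these generators are topological'' needs for hyperbolicity: each topological generator has nontrivial image in the torsion-free group $\pi_1(S)$, hence infinite order in $\Gamma$. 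The paper itself settles the non-commutation with a one-line remark that $[a,b]w$ is a minimal relation.
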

\begin{proof}
  We may assume~$k>0$.
  Let~$c=e_3$ and~$d = e_4$.
  By Lemma~\ref{lem:irred}, the subgroups~$\langle a,b\rangle$
  and~$\langle c,d\rangle$ act irreducibly on~$V$: indeed~$[a,b]w$ is a minimal
  relation involving~$a,b,c$ or~$d$, so~$[a,b]\neq 1$ and~$[c,d]\neq 1$.

  We can therefore apply Lemma~\ref{lem:injsurj} to conclude that~$\Phi_{a,b}$
  is surjective and~$\Psi_{c,d}$ is injective: indeed, since~$a,b,c,d$ are
  hyperbolic, their fixed-point space in~$\Sym^k(\C^2)$ has dimension~$1$, so
  the same is true in~$V$.

  We therefore have an explicit description of~$H^1(\Gamma,V)$ by
  Lemma~\ref{lem:H1onehandle}. We explain how to compute the described basis
  in time~$O(k^\omega (n+\sum_j \log \nu_j))$. Note that~$\dim_K V = \dim_\C
  \Sym^k(\C^2) = k+1$.
  \begin{enumerate}
    \item compute~$\ker\Phi_{a,b}$ in time~$O(k^\omega)$;
    \item compute a complement of~$\im\Psi_{c,d}\subset V^2$ in time~$O(k^\omega)$;
    \item compute all the elements of~$\GL(V)$ corresponding to the action of
      the prefixes of~$[a,b]w$ in time~$O(k^\omega n)$;
    \item for each~$1\le j\le f$:
      \begin{itemize}
        \item Compute~$\rho(1)+\rho(\gamma_j) + \dots + \rho(\gamma_j)^{\nu_j-1}$
          in time~$O(k^\omega\log\nu_j)$ by binary powering;
        \item Compute~$\ker(\rho(1)+\rho(\gamma_j) + \dots +
          \rho(\gamma_j)^{\nu_j-1})$
          in time~$O(k^\omega)$;
      \end{itemize}
    \item Compute~$N_{3,4}$ in time~$O(k^\omega)$;
    \item For each $4<i\le n$: compute~$N_i$ in time~$O(k^\omega)$.
  \end{enumerate}
  This proves the proposition.
\end{proof}

\begin{rem}
  The explicit description in Lemma~\ref{lem:H1onehandle} is compatible with
  Shimura's dimension formula~\cite[Theorem~2.23]{shimura} via the
  Eichler--Shimura isomorphism
  \[
    H^1(\Gamma,\Sym^k(\C^2)) \cong M_{k+2}(\Gamma)^2.
  \]
\end{rem}

\begin{rem}
  Proposition~\ref{prop:computeH1} is trivial in the non-co-compact case, since one
  can remove the long relation and one parabolic generator: $\Gamma$ is a free
  product of cyclic groups.
\end{rem}

In order to obtain Theorem~\ref{thm:specialpres} and
Theorem~\ref{thm:introcomputeH1} for congruence arithmetic Fuchsian groups from
Theorem~\ref{thm:algopres} and Proposition~\ref{prop:computeH1}, note the
following standard facts (see for instance~\cite{genus0shim}):
\begin{itemize}
  \item
	When $\Gamma$ is arithmetic, coming from a quaternion algebra $A$ over a
	field $F$, each elliptic element
	corresponds to an embedding of a cyclotomic field $K$ into $A$.
    For each $i$, this gives the bound
    $\phi(\nu_i)\leq 2[F:\Q]$ where~$\phi$ denotes Euler's totient function, so
    that~$\log \nu_i = O(\log\, [F:\Q])$. Moreover, we have~$[F:\Q] = O(\mu)$
    by the structure of arithmetic Fuchsian groups, their volume formula and
    Odlyzko's bound~\cite{odlyzko}, so that~$\log\nu_i = O(\log\mu)$.
  \item By a theorem of Selberg and Zograf~\cite{zograf}, there is only a finite
    number of congruence arithmetic Fuchsian groups of genus at most~$1$. We can
    therefore compute the cohomology using direct linear algebra for them and
    apply Proposition~\ref{prop:computeH1} when~$g\ge 2$.
\end{itemize}

\section{Implementation}\label{sec:implem}

We implemented the topological algorithms in PARI/GP~\cite{PARI2}; the code is
available at \url{https://github.com/rayanebait/Cohomology-of-Shimura-curves},
and uses Rickards's PARI code to compute fundamental domains~\cite{rickards}.
We measured the time to compute the cohomology group~$H^1(\Gamma,\C)$ in the
following example, using our approach and using Magma's current implementation.
Let $F = \Q(t)$ where~$t^3-3t-1=0$ and let~$A$ be the quaternion
algebra~$\left(\frac{-3,4t}{F}\right)$.
We fix a maximal order~$\Or$ in~$A$ and for various prime
ideals~$\p$ we let~$\Gamma =
\Or_0(\p)^\times/\Z_F^\times$
where~$\Or_0(\p)\subset \Or$ is an Eichler order of
level~$\p$.
The results are displayed in Figure~\ref{fig:time} in logarithmic scale.
More precisely, we first measure the time to compute a fundamental domain
for~$\Gamma$ using \texttt{X = afuchinit(A,ord,1)} from Rickards's package, and report it as
\textsf{fdom} in Figure~\ref{fig:time}. We then measure the time to compute a
one word presentation for~$\Gamma$ using Theorem~\ref{thm:algopres}
(\texttt{afuch\_presentation(X,"oneword")} in our implementation); as explained
in Section~\ref{sec:cohomology} this
gives an explicit basis of~$H^1(\Gamma,\C)$ without extra computation.
In Magma, we use the commands \texttt{M := HilbertCuspForms(F,N :
QuaternionOrder := OO); Dimension(M : UseFormula := false)}. This computes the
dimension of~$H^1(\Gamma,\C)$, but by forbidding the use of the dimension
formula we force the computation of an actual basis for the cohomology space.
However, by varying the level and providing a fixed order, we prevent the
computation of new fundamental domains, so that the measured time only
corresponds to cohomology computation.
The algorithm is described in~\cite[Section 6]{greenbergvoightHilbert}: it uses
direct linear algebra via Shapiro's lemma.
For the same reason, we could compute a fundamental polygon by a
topological method using the covering map; however these measured timings shows
how our algorithm would behave in a setting where the discriminant varies
instead of the level.
The slope of the best linear fit is $4.18$ for \textsf{magma}, $1.54$ for
\textsf{fdom} (conjectured to be~$2$ asymptotically~\cite[Section 5]{rickards}) and $1.00$
for \textsf{oneword}.

\begin{figure}
	\centering
    \includegraphics[width=0.7\textwidth]{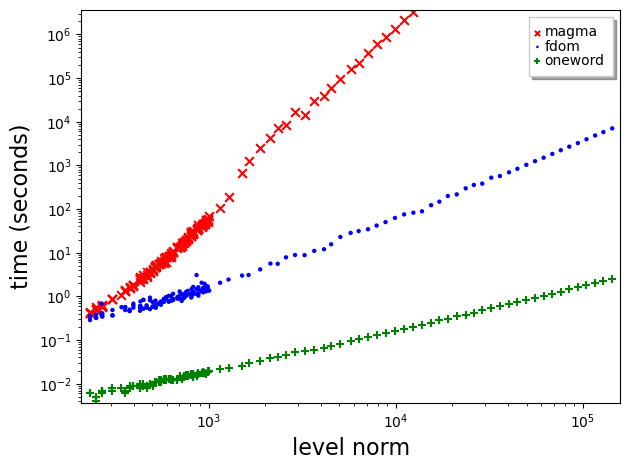}
	\caption{Computing~$H^1(\Gamma,\C)$}
	\label{fig:time}
\end{figure}

%
%


%

\providecommand{\bysame}{\leavevmode\hbox to3em{\hrulefill}\thinspace}
\providecommand{\MR}{\relax\ifhmode\unskip\space\fi MR }
\providecommand{\MRhref}[2]{%
  \href{http://www.ams.org/mathscinet-getitem?mr=#1}{#2}
}
\providecommand{\href}[2]{#2}

\end{document}